\newtheorem{definition}{Definition}
\newtheorem{theorem}{Theorem}
\newtheorem{lemma}[theorem]{Lemma}
\newtheorem{assumption}{Assumption}
\newtheorem{remark}{Remark}
\begin{document}
%
\title{Decomposition of non-convex optimization via bi-level distributed ALADIN}
%
%
%

\author{Alexander~Engelmann,~\IEEEmembership{Student Member,~IEEE,}
        Yuning~Jiang,~\IEEEmembership{Student Member,~IEEE,}
        Boris~Houska,~\IEEEmembership{Member,~IEEE,}
        and Timm~Faulwasser,~\IEEEmembership{Member,~IEEE}
	\thanks{This work received funding from the European Union’s Horizon 2020
	research and innovation program under grant agreement No. 730936. TF acknowledges further
	support from the Baden-W\"u{}rttemberg Stiftung under the Elite Programme for Postdocs. 
	 YJ and BH are supported by ShanghaiTech University, Grant-Nr. F-0203-14-012.}
 \thanks{AE and TF are with the Institute for Automation and Applied Informatics, Karlsruhe Institute of Technology, Eggenstein-Leopoldshafen, Germany
 	{\tt alexander.engelmann@kit.edu timm.faulwasser@ieee.org}}%
\thanks{YJ and BH are with the School of Information Science and Technology, ShanghaiTech University, Shanghai, China
	{\tt $\{$jiangyn, borish$\}$@shanghaitech.edu.cn }}%

}

\maketitle

\begin{abstract}
Decentralized optimization algorithms are important in different contexts, such as distributed optimal power flow or distributed model predictive control, as they avoid central coordination and enable decomposition of large-scale problems.
In case of constrained non-convex optimization only a few algorithms are currently are available; often their performance is limited,  or they lack convergence guarantees.
This paper proposes a framework for decentralized non-convex optimization via bi-level distribution of the Augmented Lagrangian Alternating Direction Inexact Newton (ALADIN) algorithm. 
Bi-level distribution means that the outer ALADIN structure is combined with an inner distribution/decentralization level solving a condensed variant of ALADIN's convex coordination QP by  decentralized algorithms. 
We prove sufficient conditions ensuring local convergence while allowing for inexact decentralized/distributed solutions of the coordination QP. 
Moreover, we show how a decentralized variant of conjugate gradient or decentralized ADMM schemes can be employed at the inner level. 
We draw upon case studies from power systems and robotics to illustrate the performance of the proposed framework. 
\end{abstract}

\begin{IEEEkeywords}
Decentralized optimization, decomposition, ALADIN, ADMM, conjugate gradient, distributed optimal power flow, distributed nonlinear model predictive control. 
\end{IEEEkeywords}

%
\IEEEpeerreviewmaketitle

\section{Introduction}

\emph{Distributed} optimization algorithms are of interest in many engineering applications due to their ability to solve large-scale problems efficiently and enable solution to optimization problems with limited information exchange \cite{Boyd2011}.\footnote{Note that there is no unified notion of \emph{distributed optimization}---while the classical optimization literature allows for (preferably small) central coordination \cite{Bertsekas1989}, in the power systems community optimization with any kind of centralized coordination is called \emph{hierarchical} or hierarchically distributed \cite{Molzahn2017}.} 
%
These algorithms often employ a (usually simple) global coordination step while computationally expensive operations are executed in parallel or decentralized by local agents.
Some algorithms avoid any kind of central coordination and communicate on a neighborhood basis only; they are commonly denoted as \emph{decentralized} \cite{Nedic2018a}.
Decentralized algorithms are of significant application interest; yet they are difficult to design and to analyze. 

The majority of results on distributed optimization investigates \emph{convex} problems \cite{Boyd2011,Bertsekas1989,Gabay1976}. 
Many practically relevant problems, however,  are inherently \emph{non-convex}; examples range from non-linear model predictive control \cite{Stewart2011,Mehrez2017} to power systems \cite{Erseghe2014a,Engelmann18a,Kekatos2013} and wireless sensor networks \cite{Lee2006}.

An approach to unconstrained non-convex problems via a push-sum algorithm can be found in \cite{Tatarenko2017}; \cite{Hours2017}  employs an alternating trust-region method with convergence guarantees for general non-convex problems.
Algorithms based on distributing steps of centralized algorithms like Sequential Quadratic Programming (SQP) can be found in \cite{Tran-Dinh2013,Necoara2009a}.
A decomposition method of the linear algebra subproblems  of an interior point method using the Schur complement is presented in \cite{Kang2014}.
Moreover, for special classes of non-convex problems, the Augmented Direction of Multipliers Method (ADMM) has convergence guarantees \cite{Hong2016,Wang2019}.
Note, however, that only a few algorithms for \emph{decentralized} non-convex optimization exist; to the best of our knowledge the only currently available algorithms are decentralized variants of the before mentioned algorithms \cite{Hong2016} and \cite{Tatarenko2017}.

The present paper proposes a design framework for general purpose decentralized algorithms applicable to constrained non-convex optimization defined over networks with generic topology. 
To this end, we build upon the Augmented Lagrangian Alternating Direction Inexact Newton (ALADIN) method \cite{Houska2016} which  solves general non-convex problems to local optimality with guarantees.
ALADIN exhibits advantageous local quadratic convergence under mild technical assumptions; however it requires solving a centralized Quadratic Program (QP) as coordination step. 

Specifically, we propose to \emph{decentralize} ALADIN by solving the coordination QP---which is the only centralized step---in a decentralized fashion.\footnote{Note that the globalization routine in ALADIN also requires central coordination. However, as our goal for the present paper is  developing a local algorithm, the globalization routine is not considered. Hence decentralizing the solution of the coordination QP provides an avenue towards a fully decentralized algorithm.}
To this end, we apply condensing techniques  similar to \cite{Frasch2013,Kouzoupis2016} to reduce the dimension of the coordination QP. 
Moreover, we prove that this coordination QP inherits the sparsity pattern from the original problem. 
We use this insight in the key step of our developments:
the introduction of a second (inner) level of problem distribution to ALADIN. In other words, we show how the coordination QP can be solved efficiently in a decentralized fashion. 
To the latter end, we propose a decentralized variant of the Conjugate Gradient (CG) method. 
We also investigate the application of decentralized ADMM. 
The proposed  framework is based on two consecutive layers of problem distribution: the general outer ALADIN structure is combined with a second inner layer. Hence we refer to it as \emph{bi-level distribution}.
As iterative methods (such as CG and ADMM) typically return inexact solutions, the original local convergence analysis for ALADIN \cite{Houska2016} is not directly applicable.
Accounting for this fact, we show that local convergence properties of ALADIN are preserved by enforcing bounds on the accuracy of the inner decentralized methods.
These bounds are derived using arguments from inexact Newton methods \cite{Dembo1982}.
This way we obtain---to the best of our knowledge---one of the first \emph{decentralized} algorithms with local convergence guarantees for constrained non-convex problems. 


The remainder is structured as follows: Section \ref{sec:Prelim} recalls ALADIN and condensing techniques for the coordination QP. 
Section \ref{sec:convAnal} shows how the local convergence properties of ALADIN while solving inexactly the coordination QP.
Section \ref{sec:distrSol} provides details on how to solve the reduced system in a decentralized fashion using decentralized ADMM \cite{Gabay1976,Boyd2011} and decentralized conjugate gradient.
Finally,  examples from power systems and from robotics are presented in Section \ref{sec:NumRes}.

\emph{Notation.}
If not explicitly stated differently, we use superscripts $(\cdot)^k$ for inner iterations  and omit outer iteration indexes for simplicity. In optimization problems the Lagrange multiplier $\kappa$ associated to constraint $h$ is denoted as $h(x)\leq 0 \;|\; \kappa$. Given a matrix $S\in \mathbb{R}^{n\times m}$, $S_{ij}$ denotes its $ij$th entry.

\section{Preliminaries \& Problem Statement } \label{sec:Prelim}

\subsection{Recalling ALADIN}
Distributed optimization aims at solving problems of the form\footnote{Ovserve that \eqref{eq:sepProb} can be interpreted as a generalization of a consensus problem \cite{Boyd2011} in the sense that any consensus problem can be expressed in form of \eqref{eq:sepProb} by appropriately choosing $A_i$.}${}^,$\footnote{
\label{footnote:constr}
For the sake of simplified notation we consider only inequality constraints $h_i$ here. Including equality constraints $g_i:\mathbb{R}^{n_{x_i}}\mapsto  \mathbb{R}^{n_{g_i}}$ does not pose any difficulty as $g_i$ can be reformulated as $0\leq g_i(x_i)\leq 0$.}
\begin{subequations}\label{eq:sepProb}
\begin{align} 
\min_{x\in\mathbb{R}^{n_x}} \,& \sum_{i\in \mathcal{R}} f_i(x_i) \\  
\text{subject to} \qquad h_i(x_i)&\leq 0 \quad \forall\, i \in \mathcal{R} \label{eq:IneqConstr}\\ 
\sum_{i\in \mathcal{R}}A_i x_i&=0, \label{eq:ConsConstr}
\end{align}
\end{subequations}
with objective functions $f_i:\mathbb{R}^{n_{x_i}}\rightarrow\mathbb{R}$ and constraints $h_i:\mathbb{R}^{n_{x_i}}\rightarrow\mathbb{R}^{n_{h_i}}$. In  all subproblems $i\in \mathcal{R}=\{1,\dots,N\}$ the functions $f_i$ and $h_i$ are assumed to be twice continuously differentiable  and possibly non-convex. The overall decision vector is 
$x:={(x_1^\top,\dots,x^\top_{N})}^\top \in \mathbb{R}^{n_x}$ and the matrices $A_i \in \mathbb{R}^{n_c\times n_{x_i}}$ describe couplings between subproblems.
 Standard ALADIN is summarized in Algorithm~\ref{alg:ALADIN2};  we refer to \cite{Houska2016,Engelmann2019,Jiang2017} for details including convergence proofs and application examples. 
\begin{algorithm}[t]
	\caption{Standard ALADIN (full-step variant)}
	\small
	\textbf{Initialization:} Initial guess $(z^0,\lambda^0)$, parameters $\Sigma_i\succ 0,\rho,\mu$. \\
	\textbf{Repeat} (until convergence):
	\begin{enumerate}
		\item \textit{Parallelizable Step:} Solve for all $i\in \mathcal{R}$ locally
		\begin{equation}
		\begin{aligned} \label{eq:locStep}
		x_i^k=\underset{x_i}{\text{arg}\text{min}}&\quad f_i(x_i) + (\lambda^k)^\top A_i x_i + \frac{\rho}{2}\left\|x_i-z_i^k\right\|_{\Sigma_i}^2 \\ 
		&\text{s.t.}\quad h_i(x_i) \leq 0 \quad \mid \kappa_i^k,		
		\end{aligned}
		\end{equation}
		and compute sensitivities $H_i^k$, $g_i^k$ and  $C_i^k$, cf. \cite{Houska2016}.
		\item \textit{Coordination Step:} Solve the coordination  problem \label{step:QPstep}
		\begin{align} \label{eq:globStep}
		\notag
		&\underset{\Delta x,s}{\text{min}}\;\sum_{i\in \mathcal{R}} \displaystyle \frac{1}{2}\Delta x_i^\top H^k_i\Delta x_i + {g_i^k}^\top \Delta x_i \hspace{-0.1em}  + \hspace{-0.1em} {\lambda^k}^\top  \hspace{-0.2em} s + \hspace{-0.1em}  \textstyle \frac{\mu}{2}\|s\|^2_2   \\ 
		&\begin{aligned} 
		\text{s.t.}                                &\;\;\, \sum_{i\in \mathcal{R}}A_i(x^k_i+\Delta x_i) =  s     &&|\, \lambda^\text{QP},\\
		&\;\;\;  C^{\mathrm{act}\,k}_i \Delta x_i = 0                                    &&\forall i\in \mathcal{R}.
		\end{aligned}
		\end{align}
		\item \textit{Broadcast and Update Variables:} \label{step:update}		 \noindent \\ 
 		$ \qquad 
		z^{k+1} \leftarrow  x^k + \Delta x^k\quad\text{and} \quad \lambda^{k+1}  \leftarrow  \lambda^\mathrm{QP}.
		$
	\end{enumerate}
	\label{alg:ALADIN2}
\end{algorithm}

Two main steps in ALADIN require central coordination and thus render ALADIN distributed instead of decentralized: (i) the coordination QP in Step~\ref{step:QPstep}) and  (ii) an additional globalization strategy which is neglected (for the sake of simplicity) in Step~\ref{step:update}). Here, we focus on designing a \emph{local} optimization algorithm. Hence, we use the full-step variant of ALADIN and  focus on issue (i). Note that---upon solving Step~\ref{step:QPstep}) exactly in a decentralized fashion and modulo technical subtleties---one directly obtains a decentralized algorithm for constrained non-convex problems \eqref{eq:sepProb}.

\subsection{Condensing the coordination QP}
In ALADIN  (Algorithm \ref{alg:ALADIN2}) the coordination QP \eqref{eq:globStep} directly scales with the number of decision variables and constraints $(n_x + n_h + n_c)$, which may be prohibitive in many applications. 
Hence we aim at reducing the size of \eqref{eq:globStep} to the number of coupling constraints $n_c$ which is typically much smaller than $(n_x + n_h)$. 
In context of direct methods for numerical optimal control, a similar approach has been used in \cite{Kouzoupis2016}. 
Subsequently, we derive the reduced QP based on the Schur-complement whereas the analysis in \cite{Kouzoupis2016} is based on dualization. In contrast to \cite{Kouzoupis2016}, we consider slack variables $s$  as they are important in practice.

In Step 2) of ALADIN one solves the coordination QP
\begin{align}
\label{eq::conqp}\notag
&\min_{\Delta x,s}& \sum_{i\in \mathcal{R}} \frac{1}{2}\Delta x_i^\top H_i\Delta x_i + &{g_i}^\top \Delta x_i
+ \lambda^\top s + \frac{\mu}{2}\|s\|^2_2\\
&\quad\text{s.t.} & C^\mathrm{act}_i \Delta x_i &= 0,  \qquad  \; \,\forall i\in \mathcal{R}\\ \notag
&&\sum_{i\in \mathcal{R}}A_i(x_i+\Delta x_i) &=  s \quad|\; \lambda^\mathrm{QP},
\end{align}
where $H_i \in \mathbb{R}^{n_{x_i} \times n_{x_i}}$ are positive definite Hessian approximations of the local Lagrangians, $g_i\in \mathbb{R}^{n_{x_i}}$ and the gradients, $\lambda \in \mathbb{R}^{n_c}$ are Lagrange multiplier estimates for the consensus constraint, $C^\mathrm{act}_i\in \mathbb{R}^{n^k_{h_i}\times n_{x_i}}$ are constraint linearizations of the active constraints with $n^k_{h_i}$ being the number of active constraints in the $k$th ALADIN iteration in subproblem $i \in \mathcal{R}$. $A_i \in \mathbb{R}^{n_c \times n_{x_i}}$ describes linear coupling between the subproblems. 
The slack $s \in \mathbb{R}^{n_c}$ in combination with a sufficiently large penalty parameter $\mu \in \mathbb{R}_{+}$ fosters numerical stability.\footnote{Neglecting the slack variables $s$ simplifies condensing. However, these variables are essential for handling inconsistent constraint linearizations \cite{Houska2016}. The examples of Section  \ref{sec:NumRes} fail to converge in absence of them.} For the sake of readability, we suppress the ALADIN iteration superscripts $(\cdot)^k$ whenever possible without ambiguity. 

\begin{assumption}[Strong regularity] \label{ass:LICQ}
For all ALADIN iterates $k \in \mathbb{N}$, for all $i\in \mathcal{R}$, and for all local minimizers of \eqref{eq:sepProb}, linear independence constraint qualification (LICQ), strict complementarity condition (SCC) and second-order sufficient conditions (SOSC) are satisfied on the nullspace of $C_i$, cf. \Cite{Nocedal2006}.\footnote{Note that the SOSC assumption made here is slightly stronger than the general SOSC from \cite{Nocedal2006}. Here we require positive definiteness of $H_i$ on the tangent space of the nonlinear constraints and do not consider the nullspace of the consensus constraints \eqref{eq:ConsConstr}.}
\end{assumption}
We employ the nullspace method  \cite{Nocedal2006} to project \eqref{eq::conqp} onto the subspace spanned by $C_i^\mathrm{act}$. 
Assumption~\ref{ass:LICQ} implies that $C^\mathrm{act}:=\operatorname{diag}_{i\in \mathcal{R}} C_i^\mathrm{act} \in \mathbb{R}^{n^k_{h}\times n_x}$ has full row-rank. Hence parametrizing $\operatorname{null}(C^\mathrm{act})$ in terms of $v\in \mathbb{R}^{n_x-n^k_{h}}$ yields  
\[
\operatorname{null}(C^\mathrm{act})=\{ x \in \mathbb{R}^{n_x} \; | \; x=Zv,\; v \in \mathbb{R}^{n_x-n^k_{h}}\}
\]
 where the columns of $Z \in \mathbb{R}^{n_x \times (n_x-n^k_{h})}$ form a basis of $\operatorname{null}(C^\mathrm{act})$. With $x_i:=Z_iv_i$ for all $i \in \mathcal{R}$, we write \eqref{eq::conqp} as
\begin{align} \notag 
&\min_{\Delta v,s}&& \sum_{i\in \mathcal{R}} \frac{1}{2}\Delta v_i^\top\bar H_i\Delta v_i +  {\bar g_i}^{\top} \Delta v_i + \lambda^{\top} s + \frac{\mu}{2}\|s\|^2_2\\  
\label{eq::redProb}
&\quad\text{s.t.} && \sum_{i\in \mathcal{R}}\bar A_i(v_i+\Delta v_i) =  s \quad|\; \lambda^\mathrm{QP}.
\end{align}
where $\bar H_i := Z_i^\top H_i Z_i$, $\bar g_i := Z_i g_i$,  and $\bar A_i := A_i Z_i$.
Upon eliminating the equation for $s$, the KKT conditions of \eqref{eq::redProb} read
\begin{equation} \label{eq:KKTsystem}
\begin{pmatrix}
\bar H & \bar A^\top \\
\bar A & -\frac{1}{\mu} I
\end{pmatrix}   
\begin{pmatrix}
\Delta v \\ \lambda^{\text{QP}}
\end{pmatrix}      
= 
\begin{pmatrix}
-\bar g \\ - \bar Av - \frac{1}{\mu} \lambda
\end{pmatrix},
\end{equation}
where $\bar H := \operatorname{diag}\left (\{\bar H_i\}_{i \in \mathcal{R}}\right )$, $\bar g := \left (\,\bar g_1^\top,\; \dots,\; \bar g_N^\top\, \right )^\top$ and  ${\bar A:= \left (\,\bar A_1,\; \dots\;, \bar A_N \,\right )}$.

We use the \emph{Schur-complement} \cite[Chap. 16]{Nocedal2006} to further reduce \eqref{eq:KKTsystem}.  SOSC implies that $\bar H$ is positive definite and therefore invertible. Hence, we solve the first row of \eqref{eq:KKTsystem} as $\Delta v = \bar H^{-1}(-{\bar A}^\top \lambda^{\text{QP}} -\bar g)$ and obtain
\begin{equation} \label{eq:redRedSystem}
(  \mu^{-1}I + \bar A \bar H ^{-1} \bar A^\top) \lambda^{\text{QP}} = \bar A (v  - \bar H ^{-1}\bar g) + \mu^{-1} \lambda
\end{equation}
which is a linear system of equations of dimension ${n_c}$. 
After solving \eqref{eq:redRedSystem}, the solution to \eqref{eq::conqp} $\Delta x$ can be obtained by backwards substitution.
Exploiting the block structure of $\bar H$, $\bar g$ and $\bar A$ we write \eqref{eq:redRedSystem} as
\begin{equation} \label{eq:redRedSystem2}
\left(\mu^{-1}I + \sum_{i\in \mathcal{R}} S_i  \right) \lambda^{\text{QP}} = \mu^{-1} \lambda + \sum_{i\in \mathcal{R}} s_i  
\end{equation}
where $S_i=\bar A_i \bar H_i^{-1} \bar A_i^\top$ and $s_i=\bar A_i (v_i -  \bar H_i ^{-1} \bar g_i)$. Observe that the matrices $S_i$ and the vectors $s_i$ can be computed entirely locally. 
	Furthermore, reverse application of the above formulas shows that the increments $\Delta x_i$ can be computed locally via 
	\begin{equation} \label{eq:backSub}
	\Delta x_i = Z_i \bar H_i^{-1}\left ( -\bar A_i^\top \lambda^{\text{QP}}-\bar g_i\right ).
	\end{equation}
Doing so, we arrive at a variant of ALADIN requiring  less communication compared to the standard one in   Algorithm~\ref{alg:ALADIN2}.


\subsection{Bi-level distributed ALADIN}

\begin{algorithm}[t]
	\caption{Bi-level distributed ALADIN}
	\small
	\textbf{Initialization:} Initial guess $(z^0,\lambda^0)$, parameters $\Sigma_i\succ 0,\rho,\mu$. \\
	\textbf{Repeat} (until convergence):
	\begin{enumerate}
		\item \textit{Parallelizable Step:} Solve for all $i\in \mathcal{R}$ locally \label{step:LocStepBil}
		\begin{equation}
		\begin{aligned} \label{eq:locStepbil}
		x_i^k=\underset{x_i}{\text{arg}\text{min}}&\quad f_i(x_i) + (\lambda^k)^\top A_i x_i + \frac{\rho}{2}\left\|x_i-z_i^k\right\|_{\Sigma_i}^2 \\ 
		&\text{s.t.}\quad h_i(x_i) \leq 0 \quad \mid \kappa_i^k,	
		\end{aligned}
		\end{equation}
		and compute \emph{condensed}  sensitivities $\tilde S_i$ and $\tilde s_i$.
		\item \textit{Coordination Step:} Solve decentralized/distributed \label{step:GlobStepBil}
		\begin{equation} \label{eq:redRedSystemAlg}
 		\left(\mu^{-1}I + \sum_{i\in \mathcal{R}} S_i  \right) \lambda^{\text{QP}} = \mu^{-1} \lambda + \sum_{i\in \mathcal{R}} s_i 
		\end{equation}
		with residuum $		\|r_\lambda^k\|$ \eqref{eq:lamResidual} small enough according to \eqref{eq:pRes}.
		\item \textit{Broadcast and 
		Update Variables:} \\
		$\lambda^k\leftarrow\lambda^{\text{QP}}$ and $z_i^{k+1}\leftarrow x_i^k + \Delta x_i^k$ using \eqref{eq:backSub}.
	\end{enumerate}
	\label{alg:ALADINbil}
\end{algorithm}

Algorithm \ref{alg:ALADINbil} summarizes the general algorithmic framework for bi-level distributed ALADIN. 
Note that the condensing all iterates $k$ for \eqref{eq::conqp} can be performed locally and that a coordination QP of reduced dimension is used for coordination. This distributed algorithm can in principle be applied as is. However, it still requires solving a (less complex) hierarchical coordination problem \eqref{eq:redRedSystemAlg}. 
Observe that solving \eqref{eq:redRedSystemAlg} by a decentralized algorithm, one obtains a decentralized variant of ALADIN. In Section \ref{sec:distrSol} we propose two variants for doing so: one based on conjugate gradient and one based on ADMM. 
As, for conceptual and numerical reasons, these iterative algorithms do not yield an exact values of $\lambda^{\text{QP}}$, the next section presents a convergence analysis of ALADIN for inexact solutions to  \eqref{eq:redRedSystemAlg}.

\section{Local Convergence Analysis} \label{sec:convAnal}
Usually decentralized algorithms solving \eqref{eq:redRedSystemAlg} achieve a finite precision only. 
Hence it is fair to ask whether  it is possible to preserve  local convergence guarantees under inexact solutions. 
We answer this question by combining properties of ALADIN \cite{Houska2016} with classical results from inexact Newton methods \cite{Dembo1982}.

Bi-level distributed ALADIN is composed of two main steps: the parallelizable Step~\ref{step:LocStepBil}) solving local NLPs and computing (condensed) sensitivities as well as the coordination Step~\ref{step:GlobStepBil}) solving \eqref{eq:redRedSystemAlg}. In order to establish local convergence properties of bi-level distributed ALADIN, we aim at ensuring progress towards a local minimizer in both steps.

From \cite[Lemma 3]{Houska2018} we have that the mapping formed by Step~\ref{step:LocStepBil}) is locally Lipschitz, i.e.
\begin{equation} \label{eq:minLip}
\|p^{k}-p^\star\| \leq \chi \|q^{k}-p^\star\|
\end{equation}
with $q^k=(z^k, \lambda^k,\kappa^{k-1})$ and $p^k:=(x^k,\lambda^k,\kappa^k)$ for some $\chi < \infty$.
The superscript $(\cdot)^\star$ denotes optimal primal and dual variables of 
\eqref{eq:sepProb}.

It remains to analyze the progress in the coordination problem~\eqref{eq:redRedSystemAlg}.
Eliminating $s$, the optimality conditions of \eqref{eq:globStep} read
\begin{equation} \label{eq:redKKT-QPdelta}
\underbrace{\begin{pmatrix} 
H  & A^\top & C^{a^\top}  \\
A & -\frac{1}{\mu}I & 0 \\
C^{a}   & 0 & 0 
\end{pmatrix}}
_
{=:M(p^k)}
\Delta q^k \hspace{-1mm}
= \hspace{-1mm}
\underbrace{\begin{pmatrix}
-g -C^{a\top} \kappa^k -A^\top \lambda^k \\ -Ax^k +b \\ 0
\end{pmatrix}}
_
{=:m(p^k)},
\end{equation}
with $\Delta q^k=q^{k+1}-p^k$.
Apart from the entry $-\frac{1}{\mu}I$, \eqref{eq:redKKT-QPdelta} is equivalent to a Newton step for \eqref{eq:sepProb} if exact Hessians and Jacobians are used.
Hence, we have the typical progress in Step~\ref{step:QPstep}) known from Newton-type methods  \cite{Nocedal2006}
\[
\|q^{k+1}-p^\star\|\leq \gamma \|p^k-p^\star\| + \frac{\omega}{2}\|p^k-p^\star\|^2_2
\]
where $\gamma=\|I-M(p^k)^{-1}\nabla^2\mathcal{L}(p^k)\|<1$ can be seen as a bound on the error of $\nabla^2\mathcal{L}(p^k)$  with $\mathcal{L}(x,\lambda,\kappa):=f(x)+\lambda^\top Ax + \kappa^\top h(x)$ being the Lagrangian to \eqref{eq:sepProb}.
Yet this only holds for an exact solution to \eqref{eq:redKKT-QPdelta}. 

Denote the approximate solution  by $\bar q^{k+1}$ and $ \Delta \bar q^k= \bar q^{k+1}-p^k$.
We define the residual for \eqref{eq:redKKT-QPdelta} similar to inexact Newton methods as
\[
r_p^k:= M(p^k)\Delta \bar q^k - m(p^k).
\]
We assume that the residual is bounded by 
\begin{equation} \label{eq:pRes}
\|r_p^k\|\leq \eta^k\|m(p^k)\|,
\end{equation} 
which we have to guarantee during the ALADIN iterations.
Now we have all the ingredients to prove the main result of this section.
\begin{theorem}[Conv. of Bi-level decentralized ALADIN]~\\
	Consider bi-level distributed  ALADIN  (Algorithm \ref{alg:ALADINbil}).
	Suppose Assumption~\ref{ass:LICQ} holds. Let $\frac{1}{\mu^k}=O(\|q^k-p^\star\|)$, let $\nabla^2\mathcal{L}$ and $\nabla\mathcal{L}$ be Lipschitz, and let the solution to the condensed QP \eqref{eq:redRedSystemAlg} satisfy \eqref{eq:pRes} in each iteration $k\in \mathbb{N}_+$. 
	
	Then there exists  $\eta\in (\eta^k,\,\infty)$ such that bi-level distributed ALADIN converges locally to $(x^\star,\lambda^\star,\kappa^\star)$ 
	\begin{itemize}
		\item at linear rate; and
		\item at quadratic rate if $\eta^k=O(\|q^{k}-p^\star\|)$.
	\end{itemize}
\end{theorem}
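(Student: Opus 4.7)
The plan is to build the proof on the two structural bounds already isolated in the preceding discussion: the Lipschitz estimate \eqref{eq:minLip} for the parallelizable Step~\ref{step:LocStepBil}), and a Newton-type progress estimate for the coordination Step~\ref{step:GlobStepBil}) that must be extended to tolerate inexact solutions. Concretely, I would first establish the contraction behavior at $p^k$ assuming an \emph{exact} solve of \eqref{eq:redKKT-QPdelta}, then quantify the additional error introduced by the residual $r_p^k$, and finally compose the two estimates into a single recursion on $\|q^{k+1}-p^\star\|$ and $\|q^k-p^\star\|$.

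For the first ingredient I would follow the standard ALADIN analysis of \cite{Houska2016,Houska2018}: under Assumption~\ref{ass:LICQ} the matrix $M(p^k)$ in \eqref{eq:redKKT-QPdelta} is invertible with uniformly bounded inverse in a neighborhood of $p^\star$, and its difference to the exact KKT Jacobian $\nabla^2\mathcal L(p^k)$ is $O(1/\mu^k)$. Using the hypothesis $1/\mu^k=O(\|q^k-p^\star\|)$ together with Lipschitz continuity of $\nabla^2\mathcal L$ and $\nabla\mathcal L$, one obtains for the exact Newton-type iterate the estimate
\begin{equation*}
\|q^{k+1}-p^\star\|\;\le\;\gamma\,\|p^k-p^\star\|+\tfrac{\omega}{2}\|p^k-p^\star\|^2,
\end{equation*}
where $\gamma<1$ quantifies the mismatch between $M(p^k)$ and $\nabla^2\mathcal L(p^k)$ and is of order $1/\mu^k$. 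For the inexact part, I would import the argument from \cite{Dembo1982}: writing $\Delta\bar q^k=\Delta q^k+M(p^k)^{-1}r_p^k$ and invoking \eqref{eq:pRes} with the Lipschitz bound $\|m(p^k)\|=\|\nabla\mathcal L(p^k)\|\le L\|p^k-p^\star\|$, one gets $\|\bar q^{k+1}-q^{k+1}\|\le \eta^k\|M(p^k)^{-1}\|\cdot L\,\|p^k-p^\star\|$.

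Combining these two with the Lipschitz bound \eqref{eq:minLip} on the parallelizable step yields a recursion of the form
\begin{equation*}
\|\bar q^{k+1}-p^\star\|\;\le\;\bigl(\gamma+c_1\eta^k\bigr)\chi\|q^k-p^\star\|+c_2\,\chi^2\|q^k-p^\star\|^2,
\end{equation*}
for appropriate constants $c_1,c_2$. Picking $\eta$ so that $(\gamma+c_1\eta)\chi<1$ on a sufficiently small neighborhood (which is possible because $\gamma=O(1/\mu^k)=O(\|q^k-p^\star\|)$ can be made arbitrarily small by shrinking the neighborhood) establishes the linear convergence statement. The quadratic rate then follows by substituting $\eta^k=O(\|q^k-p^\star\|)$: in that case the first term becomes of quadratic order and merges with the Hessian error term into a single bound $\|\bar q^{k+1}-p^\star\|\le C\|q^k-p^\star\|^2$.

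The main obstacle, in my view, is not any single estimate but the careful bookkeeping required to close the loop on the two distinct iterates $q^k$ (input to the local NLP) and $p^k$ (its output), while simultaneously accounting for three sources of perturbation that are all of comparable order near $p^\star$: (i) the slack regularization $-\frac{1}{\mu^k}I$ perturbing the Newton system, (ii) the inexact solve residual $r_p^k$, and (iii) the quadratic Lipschitz remainder of the nonlinear KKT map. Ensuring that the combination of constants $\chi$, $\gamma$, $c_1$, $c_2$ indeed yields a genuine contraction, and identifying the correct neighborhood of attraction within which $\eta^k$ remains below the threshold $\eta$, is the most delicate point and must be done uniformly in $k$.
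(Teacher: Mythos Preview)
Your proposal is correct and follows essentially the same line as the paper's proof: triangle inequality splitting $\|\bar q^{k+1}-p^\star\|$ into the exact Newton-type progress $\|q^{k+1}-p^\star\|$ plus the inexact-solve perturbation $\|\bar q^{k+1}-q^{k+1}\|\le\|M(p^k)^{-1}\|\,\eta^k\|m(p^k)\|$, then bounding $\|m(p^k)\|$ via Lipschitz continuity and $\|p^k-p^\star\|$ via \eqref{eq:minLip}. The only cosmetic difference is that the paper immediately absorbs the $\gamma\|p^k-p^\star\|$ term into the quadratic remainder (using $\gamma=O(1/\mu^k)=O(\|q^k-p^\star\|)$), whereas you carry it along and eliminate it when shrinking the neighborhood; both routes yield the same recursion $\|\bar q^{k+1}-p^\star\|\le\tfrac{\omega}{2}\|q^k-p^\star\|^2+\alpha\beta\chi\,\eta^k\|q^k-p^\star\|$ and the same threshold $\eta=1/(\alpha\beta\chi)$.
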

\begin{proof}
The inequalities \eqref{eq:minLip}, \eqref{eq:pRes} and the Lipschitz property of $m$ with  $\frac{1}{\mu^k}=O(\|q^k-p^\star\|)$ imply
\begin{align*}
\|\bar q^{k+1}-p^\star\|&\leq \|q^{k+1}-p^\star\|+\|\bar q^{k+1} - q^{k+1}\| \\
&\leq \frac{\omega}{2}\|q^k-p^\star\|^2_2 + \alpha\cdot \eta^k\|m(p^k)-m(p^\star)\| \\
&\leq \frac{\omega}{2}\|q^k-p^\star\|^2_2 + \alpha\cdot \beta\cdot \eta^k\|p^k-p^\star\|\\
&\leq \frac{\omega}{2} \|q^k-p^\star\|^2_2 + \alpha\cdot \beta\cdot\chi \cdot \eta^k\|q^k-p^\star\|,
\end{align*}
where $\beta$ is the Lipschitz-constant of $m$.
The finiteness of $\alpha,\beta$ and $\chi$ shows linear convergence if ${\alpha\cdot\beta\cdot\chi\cdot\eta^k<1}$.
Quadratic convergence follows immediately from the above inequality if $\eta^k=O(\|q^k-p^\star\|)$.
\end{proof}
The above result shows that inexact solutions to \eqref{eq:redRedSystemAlg} do not jeopardize linear or even quadratic local convergence of bi-level distributed ALADIN. 

However, the question of  how to evaluate \eqref{eq:pRes} in a decentralized setting arises. 
To this end, we draw upon $r_p^k$ the residual of \eqref{eq:redRedSystemAlg}
\begin{align} \label{eq:lamResidual}
r_\lambda^k:=\left(\mu^{-1}I + \sum_{i\in \mathcal{R}} S_i  \right) \lambda^k - \mu^{-1} \lambda - \sum_{i\in \mathcal{R}} s_i.
\end{align}
The structure of $ S$, $s$, and \eqref{eq:lamResidual} imply that 
$r_\lambda^k = \bar A\bar H^{-1}(-\bar g-\bar A^\top \lambda^k) = \bar A \Delta v= \nabla_{\lambda} \mathcal{L}(p^k)$.
As we enforce $\nabla_{\Delta x} \mathcal{L}(p^k)=0$ and $\nabla_{\kappa} \mathcal{L}(p^k)=0$ by virtue of the nullspace method and the first row of \eqref{eq:KKTsystem}, we obtain $r_p^k={(0^\top \ \ 0^\top \ \ r_\lambda^{k\top})}^\top$ and $\|r_p^k\|=\|r_\lambda^k\|$. 
Hence, note that one can evaluate \eqref{eq:pRes} using only the residual of the reduced system $\|r^k_\lambda\|$.

\section{Decentralized  Solution of the Coord. QP \eqref{eq:redRedSystemAlg}} 
\label{sec:distrSol}
Observe that the QP \eqref{eq:redRedSystemAlg} inherits structural properties of problem \eqref{eq:sepProb}; i.e. the  Schur-complements $S_i$ inherit the sparsity pattern induced by the coupling matrices $A_i$. 
This sparsity can be exploited---either to further reduce communication by using sparse matrix storage formats or to design decentralized algorithms.
Here we focus on the latter. We first analyze the sparsity of the matrices $S_i$s and then we propose two decentralized algorithms exploiting this sparsity. 

\subsection{Sparsity of the Schur-complements}

Usually, the consensus constraint \eqref{eq:ConsConstr} describes  couplings between two neighboring subproblems $i, j \in \mathcal{R}$. This means that in the matrices $A_i$ and $A_j$  the $i$th and $j$th rows are nonzero. 
\begin{definition}[Assigned consensus constraints] \label{def:nonAss}
	A subproblem $i\in \mathcal{R}$ is called assigned to consensus constraint $j \in{ \mathcal{C} = \{1,\dots,n_c\}}$, if  the $j$th row of $A_i$ is non-zero.
	Furthermore, all subproblems assigned to consensus constraint $j$ are collected in $\mathcal{R}(j):=\{i\in \mathcal{R}\;|\; i \; \text{assigned to}\; {j \in \mathcal{C}}\}$. 
	
	A consensus constraint $j \in \mathcal{C}$ is called $n$-assigned, if $|\mathcal{R}(j)|=n.$ Furthermore, if $|\mathcal{R}(j)|\leq n$ for all $j\in \mathcal{C}$, problem \eqref{eq:sepProb} is called $n$-assigned.
\end{definition}
Observe that assigned consensus constraints generalize the usual consensus setting \cite{Boyd2011}. Moreover, they provide an  effective framework to analyze the sparsity pattern of the Schur-complements. 
We remark that any generic consensus problem can be expressed in this form via appropriate choice of $A_i$ and using additional local variables.
\begin{remark}[Reformulation as 2-assigned problem]
	Without loss of generality any $n$-assigned problem can be reformulated as $2$-assigned problem by introduction of auxiliary decision variables. 
	For example consider a $3$-assigned problem with consensus constraint
$	A_1x_1 + A_2 x_2 + A_3x_3=0$
	where $A_1,A_2,$$A_3 \neq 0$. 
	Introduce a copy of $x_2$ in subproblem $1$ as $\tilde x_2 := x_2$ and define an augmented decision vector $\tilde x_1:=(x_1 \; \tilde x_2)^\top$. This yields a $2$-assigned problem in terms of the augmented decision vectors $(\tilde x_1, x_2, x_3)$
	\begin{align*}
	(A_1 \quad A_2)\, \tilde x_1 + A_3x_3 = 0, \quad
	(0\quad I) \, \tilde x_1 - I x_2 = 0.
	\end{align*}
\end{remark}

\begin{lemma}[Sparsity of $S_i$] \label{lem:sparsity}
	The rows and columns of $S_i$ and entries of $s_i$, $i \in \mathcal{R}$, which are not assigned to consensus constraint $j$, (i.e. all  $j \notin \mathcal{C}(i):= \{j \in \mathcal{C}\; |\; i \in \mathcal{R}(j) \}$) are zero.
\end{lemma}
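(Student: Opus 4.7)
The plan is to unpack the definitions and show that the zero rows of the coupling matrix $A_i$ propagate directly through all the condensed quantities. The key observation is that $\bar A_i = A_i Z_i$, so left-multiplying by any row selector that annihilates $A_i$ also annihilates $\bar A_i$.

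First I would translate Definition~\ref{def:nonAss} into an algebraic statement: by construction, $j \notin \mathcal{C}(i)$ is equivalent to $i \notin \mathcal{R}(j)$, which in turn is equivalent to the $j$th row of $A_i$ being the zero vector, i.e.\ $e_j^\top A_i = 0$ where $e_j \in \mathbb{R}^{n_c}$ denotes the $j$th standard basis vector. I would then immediately deduce the corresponding statement for the projected coupling matrix:
\begin{equation*}
e_j^\top \bar A_i \;=\; e_j^\top A_i Z_i \;=\; 0\cdot Z_i \;=\; 0.
\end{equation*}

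From this, the sparsity claims follow by direct computation. For the Schur-complement, using $S_i = \bar A_i \bar H_i^{-1} \bar A_i^\top$, I would write
\begin{equation*}
e_j^\top S_i \;=\; (e_j^\top \bar A_i)\, \bar H_i^{-1} \bar A_i^\top \;=\; 0,
\end{equation*}
so the $j$th row vanishes; symmetry of $S_i$ (which follows from the positive definiteness of $\bar H_i$ granted by Assumption~\ref{ass:LICQ}) then gives that the $j$th column vanishes as well. The same multiplication applied to $s_i = \bar A_i(v_i - \bar H_i^{-1}\bar g_i)$ gives $e_j^\top s_i = 0$.

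I do not anticipate any real obstacle in this proof: it is a one-line propagation argument from $A_i$ to $\bar A_i$ combined with the algebraic definitions of $S_i$ and $s_i$. The only care required is notational, namely keeping track of which index set (subproblems $\mathcal{R}$ versus consensus constraints $\mathcal{C}$) is being addressed and invoking symmetry of $S_i$ to extend the row statement to columns.
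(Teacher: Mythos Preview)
Your proposal is correct and matches the paper's proof almost exactly: both arguments reduce to observing that the $j$th row of $A_i$ is zero whenever $j\notin\mathcal{C}(i)$ and propagating this through the definitions of $S_i$ and $s_i$. The only cosmetic difference is that the paper writes $S_i = A_i(Z_i \bar H_i^{-1} Z_i^\top)A_i^\top$ and reads off both row and column sparsity directly from the sandwich structure, whereas you handle rows first and then invoke symmetry of $S_i$.
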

\begin{proof}
	We have $S_i=  \bar A_i \bar H_i^{-1} \bar A_i^\top= A_i(Z_i \bar H_i^{-1} Z_i^\top) A_i^\top$. All columns of $A_i$ with $j \notin \mathcal{C}(i)$ are zero by Definition \ref{def:nonAss}. It follows immediately that the rows and columns of $S_i$ with $j \notin \mathcal{C}(i)$ are zero. The sparsity of  $s_i=\bar A_i (v_i -  \bar H_i ^{-1} \bar g_i)$ follows analogously.
\end{proof}
Lemma \ref{lem:sparsity} shows that the matrices $S_i$ and vectors $s_i$ have non-zero entries only for neighboring subproblems. 
\vspace*{-3mm}

\subsection{Consensus reformulation}

Now, we reformulate \eqref{eq:redRedSystemAlg} as a strictly convex consensus problem such that the conjugate gradient method and ADMM are applicable.
Specifically, we reformulate \eqref{eq:redRedSystemAlg} as
\begin{equation} \label{eq:sumKKTsystem}
\left (\sum_{i\in \mathcal{R}} \tilde S_i \right ) \lambda^{\text{QP}} = \sum_{i\in \mathcal{R}}  \tilde s_i
\end{equation}
where each $\tilde S_i$ and $\tilde s_i$ is constructed by local information only.
Equation \eqref{eq:redRedSystemAlg} implies that the reduced QP is separable as it involves sums of $S_i$ and $s_i$. 
However, the terms $\mu^{-1} I$ and $\mu^{-1}\lambda$ can not directly be assigned to any of the subproblems. 

One possibility is to introduce an additional subproblem which would serve as a coordinator. However, here we are interested in relying on neighborhood communication only. 
Hence we distribute $\mu^{-1} I$ and $\mu^{-1}\lambda$ uniformly to all subproblems assigned to the corresponding consensus constraint. This yields
\begin{equation} \label{eq:sparsDef}
\tilde S_i := S_i + \sum_{j=1}^{n_c} \frac{\delta_{ij}}{|\mathcal{R}(j)|\, \mu } I_j,  \; \tilde s_i := s_i + \sum_{j=1}^{n_c} \frac{\delta_{ij}}{|\mathcal{R}(j)|\, \mu } \lambda I_j,
\end{equation}
where $I_j$ contains only zeros except for $I_{jj}=1$ and $\delta_{ij} := 1$ if $j \in \mathcal{C}(i)$ and $0$ else. 
This way \eqref{eq:redRedSystemAlg} is expressed in the form of \eqref{eq:sumKKTsystem}  without destroying its sparsity pattern.

The next result reformulates \eqref{eq:sumKKTsystem} as strictly convex QP.
\begin{lemma}[Minimization to solve \eqref{eq:redRedSystem2}] \label{lem:SposDef}
	The minimizer of
	\begin{align} \label{eq:minLam}
	&\min_{\lambda}\; \frac{1}{2}\lambda^\top \sum_{i=1}^{N} \tilde  S_i  \lambda - \sum_{i=1}^{N} \tilde s_i^\top \lambda.
	\end{align}
	is unique and solves \eqref{eq:sumKKTsystem}. 
	Furthermore, \eqref{eq:minLam} is strictly convex.
\end{lemma}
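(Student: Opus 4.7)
The plan is to establish strict convexity by proving that the Hessian $\sum_{i} \tilde S_i$ of the quadratic objective in \eqref{eq:minLam} is positive definite, deduce existence and uniqueness of the minimizer, and then identify the first-order optimality condition with \eqref{eq:sumKKTsystem}. Throughout, I would exploit that by Assumption~\ref{ass:LICQ} the reduced Hessian $\bar H_i = Z_i^\top H_i Z_i$ is symmetric positive definite, hence invertible with $\bar H_i^{-1}\succ 0$.

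First, I would analyze $\sum_i \tilde S_i$. Each Schur-complement $S_i = \bar A_i \bar H_i^{-1} \bar A_i^\top$ is symmetric positive semidefinite because $\bar H_i^{-1}\succ 0$. For the correction terms in \eqref{eq:sparsDef}, observe that
\begin{equation*}
\sum_{i\in \mathcal{R}} \sum_{j=1}^{n_c} \frac{\delta_{ij}}{|\mathcal{R}(j)|\,\mu}\, I_j
= \sum_{j=1}^{n_c} \frac{1}{\mu}\,\frac{|\mathcal{R}(j)|}{|\mathcal{R}(j)|}\, I_j
= \frac{1}{\mu}\, I,
\end{equation*}
where the inner sum over $i$ counts exactly the $|\mathcal{R}(j)|$ subproblems assigned to consensus constraint $j$. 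Therefore
\begin{equation*}
\sum_{i\in \mathcal{R}} \tilde S_i \;=\; \frac{1}{\mu} I + \sum_{i\in \mathcal{R}} S_i,
\end{equation*}
which is positive definite since the first term is and the second is positive semidefinite. By the same counting argument applied to the $\lambda$-terms, $\sum_i \tilde s_i = \mu^{-1}\lambda + \sum_i s_i$.

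With $\sum_i \tilde S_i \succ 0$, the quadratic form in \eqref{eq:minLam} is strictly convex, so the minimizer exists and is unique. The first-order (necessary and sufficient) optimality condition reads
\begin{equation*}
\Bigl(\sum_{i\in \mathcal{R}} \tilde S_i\Bigr)\lambda \;=\; \sum_{i\in \mathcal{R}} \tilde s_i,
\end{equation*}
which is exactly \eqref{eq:sumKKTsystem}. Combining with the two identities above recovers \eqref{eq:redRedSystemAlg} itself, closing the argument.

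The only subtle step is the double-sum reduction using $\delta_{ij}$ and $|\mathcal{R}(j)|$; once this bookkeeping is done, positive definiteness of $\sum_i \tilde S_i$ and hence strict convexity are immediate. I would present the counting calculation explicitly since it is the place where the neighborhood distribution of the $\mu^{-1}I$ and $\mu^{-1}\lambda$ terms in \eqref{eq:sparsDef} is justified.
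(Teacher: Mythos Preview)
Your argument is correct, and it differs from the paper's in one substantive way. The paper appeals to the Appendix lemma, which establishes $\sum_i S_i\succ 0$ via the full row rank of $A$ (LICQ) and then obtains $\sum_i \tilde S_i\succ 0$ as a consequence. You instead observe that the distributed correction terms reassemble to $\mu^{-1}I$, so $\sum_i \tilde S_i=\mu^{-1}I+\sum_i S_i$ is positive definite merely because $\mu^{-1}I\succ 0$ and each $S_i\succeq 0$; you never need the rank condition on $A$. This is a cleaner and more robust route: it makes the strict convexity a direct byproduct of the slack regularization rather than of constraint-qualification structure, and it also justifies explicitly why \eqref{eq:sumKKTsystem} and \eqref{eq:redRedSystemAlg} coincide, which the paper treats as obvious.

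One small point worth tightening: when you write the first-order condition as $\bigl(\sum_i \tilde S_i\bigr)\lambda=\sum_i \tilde s_i$ you are implicitly using symmetry of $\sum_i \tilde S_i$. You do note that each $S_i$ is symmetric, and the diagonal correction in \eqref{eq:sparsDef} preserves symmetry, so the step is valid; just make this explicit, since the paper's proof takes care to write the symmetrized gradient $\tfrac{1}{2}(\tilde S+\tilde S^\top)\lambda$ first and then invoke $\tilde S_i=\tilde S_i^\top$.
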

\begin{proof}
	The first-order necessary condition for \eqref{eq:minLam} reads $\frac{1}{2}(\sum_{i=1}^{N} \tilde S_i + \sum_{i=1}^{N} \tilde S_i^\top)  \lambda - \sum_{i=1}^{N} \tilde s_i =0$. From Lemma \ref{lem:SposDef} (given in the Appendix) one has that $\tilde S_i= \tilde S_i^\top$. This proves the first assertion. Moreover,  Lemma \ref{lem:SposDef} gives $\sum_{i=1}^N \tilde S_i \succ 0$  which implies strict convexity of \eqref{eq:minLam}.
\end{proof}

\subsection{Decentralized conjugate gradient}
\begin{algorithm}[t]
	\small
	\caption{Decentralized conjugate gradient}
	\textbf{Initialization:} Initial guess $r^0=p^0= \tilde s-\tilde  S\lambda^0 $.\\
	\textbf{Preparation:} Exchange $\tilde S_{i}e_j$ and $\tilde s_i^\top e_j$  between neighboring regions $\mathcal{R}(j)$ locally for all $j \in \mathcal{C}$.\\
	\textbf{Repeat:}
	\begin{enumerate}
		\item \textit{Compute locally} 
		\[
		 \mathsf{r}^{\mathsf{S}k}_j= \sum_{j\in \mathcal{C}} r^{k\top} \tilde Se_j r^k_j  \text{ and } \mathsf{r}^{\mathsf{2}k}_j=(r_j^k)^2,
		 \]
		 between all $\mathcal{R}(j)$ for all $j \in \mathcal{C}$ using \eqref{eq:sparsityExploitS}.		
		\item \textit{Sum up globally} $\alpha^k=\left({\sum_{j\in \mathcal{C}} \mathsf{r}_j^{\mathsf{2}k}}\right)\Huge /\left({\sum_{j\in \mathcal{C}} \mathsf{r}_j^{\mathsf{S}k}}\right )$. \label{step:globSum1}
		\item \textit{Compute locally for all $j \in \mathcal{C}$} 
		\begin{align*}
		\lambda_j^{k+1} &= \lambda^k_j + \alpha^k p^k_j,\\
		r^{k+1}_j &= r^k_j-\alpha^k \sum_{i \in \mathcal{C}} \tilde S_{ji} p^{k}_i,\\
		\mathsf{r}_j^{\mathsf{2}k+1}&=(r_j^{k+1})^2.
		\end{align*}
		\item \textit{Sum up globally} $\beta^k=\dfrac{1}{\mathsf{r}_j^{\mathsf{2}k}}
		\displaystyle\sum_{j\in \mathcal{C}} \mathsf{r}_j^{\mathsf{2}k+1}$.
		\label{step:globSum2}
		\item \textit{Compute locally} $p_j^{k+1} = r^k_j + \beta^k p^k_j$ for all $j \in \mathcal{C}$.
	\end{enumerate}
	\label{alg:dCG}
\end{algorithm}

Next, we propose a sparsity exploiting variant of the conjugate gradient algorithm. 
The usual centralized conjugate gradient method with $r^0=p^0=\tilde s-\tilde S\lambda^0$ reads \cite{Nocedal2006}
\begin{subequations} \label{eq:CGiter}
	\begin{align}
	\alpha^k &= \frac{r^{k\top}r^k}{r^{k\top}\tilde Sr^k}, \label{eq:CGalpha}\\
	\lambda^{k+1} &= \lambda^k + \alpha^k p^k, \label{eq:CGlam}
	\end{align}
	\begin{align}
	r^{k+1} &= r^k-\alpha^k\tilde Sp^k, \label{eq:CGr}\\
	\beta^k &= \frac{r^{k+1 \top} r^{k+1}}{r^{k^\top}r^k}, \label{eq:CGbeta} \\
	p^{k+1} &= r^{k+1} + \beta^k p^k. \label{eq:CGp}
	\end{align}
\end{subequations}
 Recall that $\tilde S=\sum_{i \in \mathcal{R}}\tilde S_i$ and let $e_j$ be the $j$th unit vector. Then,  from Lemma \ref{lem:sparsity} and \eqref{eq:sparsDef} we have
\begin{align} \label{eq:sumSi}
\tilde S e_j = \sum_{i\in \mathcal{R}}\tilde S_i e_j = \sum_{i\in \mathcal{R}(j)}  \tilde S_i e_j,
\end{align}
i.e. the $j$th column of $\tilde S$ belonging to consensus constraint $j$ is the sum \emph{only} of the respective rows of $\tilde S_i$ of the subproblems $i\in \mathcal{R}(j)$ assigned to consensus constraint  $j \in \mathcal{C}$. 
Therefore the rows of $\tilde S$ can be constructed locally based on neighborhood communication between the assigned subproblems. 
Furthermore, in \eqref{eq:CGalpha} we have to compute 
\begin{align} \label{eq:rTopSi}
r^{k\top}\tilde S= \left (r^{k\top} \tilde Se_1,\;\dots\;,r^{k\top}\tilde S e_{n_c} \right ).
\end{align}
From \eqref{eq:sumSi} and Lemma \ref{lem:sparsity} we know that $\tilde S_{i}e_j=0$ for $i \in \mathcal{C}\setminus \cup_{i \in \mathcal{R}(j)} \mathcal{C}(i)$. Hence, the components of \eqref{eq:rTopSi} are
\begin{align} \label{eq:sparsityExploitS}
r^{k\top} \tilde Se_j =  \sum_{i \in \mathcal{C}} r^{k}_i \tilde S_{ij} = \hspace{-0.1em} \sum_{i \in \cup_{l \in \mathcal{R}(j)} \mathcal{C}(l)} \hspace{-0.1em} r^{k}_i \tilde S_{ij}, \quad \forall j \in \mathcal{C},
\end{align}
where $\tilde S_{ij}$ denotes the $ij$th element of $\tilde S$.
Observe that it suffices to exchange $r^{k}_i$ and $\tilde S_{ji}$ locally between all $i\in \mathcal{R}(j)$. As 
\begin{equation} \label{eq:summands_rSr}
r^{k\top}\tilde Sr^k = \sum_{j\in \mathcal{C}}\left ( r^{k\top} \tilde Se_j \right  )r^k_j =  \sum_{j\in \mathcal{C}} r^{k\top} \tilde Se_j r^k_j 
\end{equation}
and $r^k_j$ is also known locally, all summands in \eqref{eq:summands_rSr} can be computed locally. The only centralized operation is evaluating one global sum. The same applies to 
\begin{align*}
r^{k\top}r^k = \sum_{j\in \mathcal{C}} (r_j^k)^2,
\end{align*}
where $(r_j^k)^2$ can be computed locally. Similar analysis applies  to \eqref{eq:CGlam}-\eqref{eq:CGp}, where in \eqref{eq:CGbeta} an additional global sum  is needed and therefore the conjugate gradient needs two global sums in each iteration.\footnote{Note that although the sum is global, it can easily be decentralized by computing the sum via ``hopping" (i.e. a round-robin protocol) from neighbor to neighbor.} Algorithm \ref{alg:dCG} summarizes  the proposed decentralized variant of the conjugate gradient method.

Note that the  decentralized conjugate gradient algorithm requires communication between all subproblems assigned to a specific consensus constraint. In other words, this algorithm can be executed in decentralized fashion if the coupling described in the $A_i$s refer to two subproblems only, i.e. if Problem~\eqref{eq:sepProb} is \emph{$2$-assigned}. The same holds for ADMM as we will see in the next section. 

\subsection{Decentralized ADMM}
The above proposed decentralized conjugate gradient method still requires (very little) central coordination using the global sums in Step~\ref{step:globSum1}) and Step~\ref{step:globSum2})  of Algorithm \ref{alg:dCG}. As an alternative, we consider a decentralized variant of ADMM for solving \eqref{eq:redRedSystemAlg} without these centralized steps. 

We rely on decentralized ADMM in so-called consensus form to \eqref{eq:minLam}   \cite{Boyd2011,Bertsekas1989}.  To this end, we introduce variable copies of $\lambda$, $\lambda_1,\dots,\lambda_N$ and  write \eqref{eq:minLam} as
\begin{equation} \label{eq:consProb}
\begin{aligned}
\min_{\lambda_1,\dots,\lambda_N,\bar  \lambda} \quad&\sum_{i=1}^{N} f_i(\lambda_i) \\
\;\text{s.t.} \quad&\lambda_i=\bar \lambda\;\; | \;\; \gamma_i, \qquad i=1  ,\dots,N,
\end{aligned}
\end{equation}
with $f_i(\lambda_i) := \lambda_i^\top S_i \lambda_i - s_i^\top \lambda_i$. 
The ADMM iteration rules can be derived from the method of multipliers combined with coordinate descent \cite{Bertsekas1989}. 
Decentralized ADMM is summarized in Algorithm~\ref{alg:dADM}.
Observe that \eqref{eq:lamUp} is an entirely local step, \eqref{eq:lamBarUp}  is a simple averaging step based on neighborhood communication, and \eqref{eq:gamUp} is again a local step. 
Furthermore \eqref{eq:lamUp} requires solving a linear system with changing right-hand sides, which means that $(\tilde S_i + \rho I)$ has to be factorized once only and can be reused in all ADMM iterations. 

	\begin{algorithm}[b]
			\small
		\caption{Decentralized ADMM}
		\textbf{Initialization:} Initial guess  $\lambda^0$ and parameter $\rho=\rho^\text{ADM}$.\\
		\textbf{Repeat:}
		\begin{enumerate}
			\item \textit{Compute locally} for all $i\in \mathcal{R}$
			\begin{equation} \label{eq:lamUp}
			\lambda_i^{k+1} \hspace{-1mm}= \hspace{-0.5mm} \underset{{\lambda_i}}{\operatorname{argmin}} \; \lambda_i^\top \hspace{-1mm} \left  ( \tilde S_i +\rho I \right) \hspace{-0.5mm}  \lambda_i  
			+ \left (\gamma^k_i - \tilde s_i - \rho \bar \lambda^k \right )^\top \hspace{-1.5mm} \lambda_i.
			\end{equation}
			
			\item \textit{Compute locally} for all consensus constraints $j\in \mathcal{C}$
			\begin{align} \label{eq:lamBarUp}
			e_j^\top \bar \lambda^{k+1} &=\frac{1}{|\mathcal{R}(j)|} \sum_{i \in \mathcal{R}(j)} e_j^\top \lambda_i^{k+1}.
			\end{align}
			\item \textit{Compute 	locally} for all $ i \in \mathcal{R}$
			\begin{align} \label{eq:gamUp}
			\gamma_i^{k+1} = \gamma_i^k + \rho\left (\lambda_i^{k+1}-\bar \lambda_i^{k+1}\right ).
			\end{align} \vspace*{-3mm}
		\end{enumerate}
		\label{alg:dADM}
\end{algorithm}

\subsection{Comparison of CG and ADMM}
The convergence properties of CG and ADMM are summarized in Table \ref{tab::convPropCGAL}. In theory, CG yields the \emph{exact} solution in at most $n_c$ steps \cite[Thm 5.1]{Nocedal2006}. 
However, in practice the convergence is typically slower as conjugate gradient is sensitive to errors caused by finite precision arithmetic.  Practically one  observes superlinear convergence \cite{Beckermann2001}.
The recent paper \cite{Makhdoumi2017}  shows sublinear convergence of ADMM for convex objectives $f_i$.\footnote{For strongly convex $f_i$, linear convergence of ADMM can be shown \cite{Yang2016,Nedic2018a,Makhdoumi2017}. In the present paper  the $f_i$ of \eqref{eq:consProb} are only convex but not strictly convex.} 
In case of \eqref{alg:ALADINbil}, the $f_i$s are only convex, hence at least sublinear convergence can be expected which is in line with our later numerical observations.
Thus conjugate gradient is expected to outperform ADMM.
An advantage of CG compared to ADMM is that no tuning of the step size is needed, as this is done ``automatically'' in Step~2) and Step~4) of CG.

As discussed in the previous section, satisfying \eqref{eq:pRes} preserves the convergence properties in  bi-level distributed ALADIN.
Note that  criterion \eqref{eq:pRes} can be evaluated locally by computing $e_j^\top r^k_\lambda$ for each $j \in \mathcal{C}$ and calculating one additional global sum. 
 However, in implementations it turns out that   a fixed number of iterations for the coordination step combined with warm starting  often suffices to ensure $0<\eta_k<0$. 

\begin{table}[h]
	\centering
	\renewcommand{\arraystretch}{1.2}
	\caption{Convergence properties of decentralized CG and decentralized ADMM for \eqref{eq:redRedSystemAlg}.}
	\begin{tabular}{rlll}
		\toprule
		conv. rate		&	CG & ADMM  \\
		\midrule
		theoretical&  $n_c$-step & sublinear$^{\footnotesize 8}$   \\
		practical &  linear/superlinear\footnote{Analyzing the convergence rate of conjugate gradient methods seems quite complex as there are different phases with different convergence rates during the iteration cf. \cite{Axelsson2014,Beckermann2001}. However, the practically observed convergence rate often is superlinear \cite{Axelsson2014}.}  & sublinear \\
		tuning & no & yes \\
		\bottomrule
	\end{tabular}
	\label{tab::convPropCGAL}
\end{table}
\begin{remark}[Related works on optimization over networks]
	Related results to  our above developments can be found in the context of distributed optimization over networks, see \cite{Nedic2018,Nedic2018a} for recent overviews. The problems considered therein are in general more difficult.  
	Frequently,  communication delays, a time-varying network topology and asynchronous operation might be considered.  
	Prominent algorithms tailored to distributed optimization over networks are, for example, EXTRA \cite{Shi2015}, NEXT and also the widely used decentralized variant of ADMM \cite{Shi2014}. Linear systems of equations are considered in \cite{Lu2018b,Mou2015}, gradient and subgradient-based algorithms can be found in \cite{Nedic2010,Jakovetic2014}. 
		Indeed most of the algorithms cited above can in principle be used to solve \eqref{eq:minLam} in decentralized fashion. 	A potential pitfall might be that the convergence rate of these algorithms is at most linear, in many cases merely  sublinear.
%
\end{remark}

\subsection{Communication analysis} \label{se:commAnal}
We turn to analyze the forward communication  need in all ALADIN variants for 2-assigned problems. 
Forward means that, for the sake of  simplicity, we consider the  communication in Step~2) of  the  different ALADIN variants where local sensitivities are communicated to the coordination QP. The backward communication in Step~3) is negligible compared to forward one. Our analysis evaluates communication by counting the  number of floating point numbers.

Moreover, we distinguish two different kinds of communication: The first one is global communication, i.e. the information sent to any central (coordinating) entity. 
The second kind is local communication between neighbors. 
We assess the local preparation steps, which are done only once per outer ALADIN iteration  in a preprocessing phase between neighboring subproblems.\footnote{Note that we analyze the communication under symmetric conditions; i.e. both regions assigned to a consensus constraint send and receive the values corresponding to the respective consensus constraint. In general, it would suffice to choose one of these two participating regions to take care of the computations. However, this would render the algorithm somehow asymmetric.}


\begin{table}[t]
	\centering
	\caption{Per-step forward communication (number of floats) for 2-assigned problems and different ALADIN variants.	}
	\begin{tabular}{rcccc}
		\toprule
		 variant	&	standard & cond.  &  ADMM &  CG    
		\\ \midrule 		
		local prep.&	-	  & -  & -&  $2 n_c^2$  $\phantom{\displaystyle \sum}$   \\
		local iter.   & - & - & $2n_c n^{\mathrm{AD}}$ &$ 2n_c n^{\mathrm{CG}}$ $\phantom{\displaystyle \sum^N}$\\
		global& $>\hspace{-0.3em}\displaystyle \sum_{i=1}^N\frac{(n_{x_i}+n_{g_i})^2}{2}$ & ${{n_c^2 + n_c}}$ & - & $2N n^{\mathrm{CG}}$ $\phantom{\displaystyle \sum_{i=1}^N}$
		\\ \bottomrule
	\end{tabular}
	\label{tab:forwComm}
\end{table}

The forward communication for solving the coordination problem~\eqref{eq:redRedSystemAlg} of bi-level distributed ALADIN once is shown in Table \ref{tab:forwComm}.
In its full variant, ALADIN communicates the first and second-order sensitivities of the objective and the first-order sensitivity of the constraints to the coordinator.  
Let the constraints $h_i$ \eqref{eq:IneqConstr} consist of $n_{gi}$ equalities (handled as per Footnote \ref{footnote:constr}) and $n_{h_i} - n_{g_i}$ inequalities.
Neglecting sparsity and counting the number of all entries of the sensitivity matrices/vectors yields the following lower bound
$
\sum_{i=1}^N\frac{(n_{x_i}+n_{g_i})(n_{x_i}+n_{g_i}+1)}{2}.
$
Note that we do not count the communication of the $A_i$s here as they have to be communication only once and do not change during iterations.
In case of active inequality constraints, $n_{g_i}$ is enlarged by the number of active inequality constraints which is bounded by $n_{h_i} - n_{g_i}$. 
Hence, the above is a lower bound on the per-step communication which may vary during the ALADIN outer iterations. 
For a detailed application-specific communication analysis for the standard ALADIN see  \cite{Engelmann2019}.

In the condensed and sparsity exploiting variant of ALADIN---i.e. Algorithm \ref{alg:ALADINbil} without decentralization of \eqref{eq:redRedSystemAlg}---the global forward communication is 
$
{n_c(n_c+1)}
$ 
where $n_c$ is the number of coupling constraints. The number of coupling constraints is typically much smaller than the total number of decision variables thus reducing the necessary communication effectively. 
Note that the $2$ in the denominator disappears due to 2-assignment and therefore each row of $\tilde S$ is composed of the rows of exactly two $S_i$.

The bi-level distributed ALADIN  ADMM variant (ALADIN ADMM) relies on purely local communication; i.e.
 in each iteration, the respective $\lambda_i$'s between two neighboring regions are exchanged. Hence, in ALADIN ADMM one communicates 
$
2n_c \cdot n^{\mathrm{AD}}
$
floats locally, where $n^{\mathrm{AD}}$ is the number of inner ADMM iterations.

Similarly, in the bi-level distributed ALADIN with conjugate gradient (ALADIN CG) one communicates 
$
2n_c \cdot n^{\mathrm{CG}}
$
floats locally and additionally $2\cdot n_c^2$ in the local preparation phase (the rows of the Schur-complements $e_j^\top S_i$).
Finally, the global communication for computing $\alpha$ and $\beta$ is  $2N\cdot n^{\mathrm{CG}}$. 

\section{Numerical Case Studies} \label{sec:NumRes}

\subsection{AC Optimal Power Flow}
Non-convex  AC optimal power flow problems are of crucial interest in control of power systems. 
Specifically, we investigate the IEEE 30-bus system shown in Figure \ref{fig:ieee30bussystem} with data from \cite{Zimmerman2011}.  
For details on how to formulate OPF problems in form of \eqref{eq:sepProb} 
see \cite{Molzahn2017, Erseghe2015,Engelmann2019}. 
Here we use the problem formulation and partitioning $\mathcal{P}$ from \cite{Engelmann2019} with  ALADIN parameters $\rho = 10^6$, $\mu = 10^7$ and the step size for the lower-level ADMM $\rho= 2\cdot10^{-2}$.
In all cases we use warm-starting for CG and ADMM to accelerate convergence.

\begin{figure}[t]
	\centering
	\includegraphics[trim={7em 45em 7em 7em},clip,width=1\linewidth]{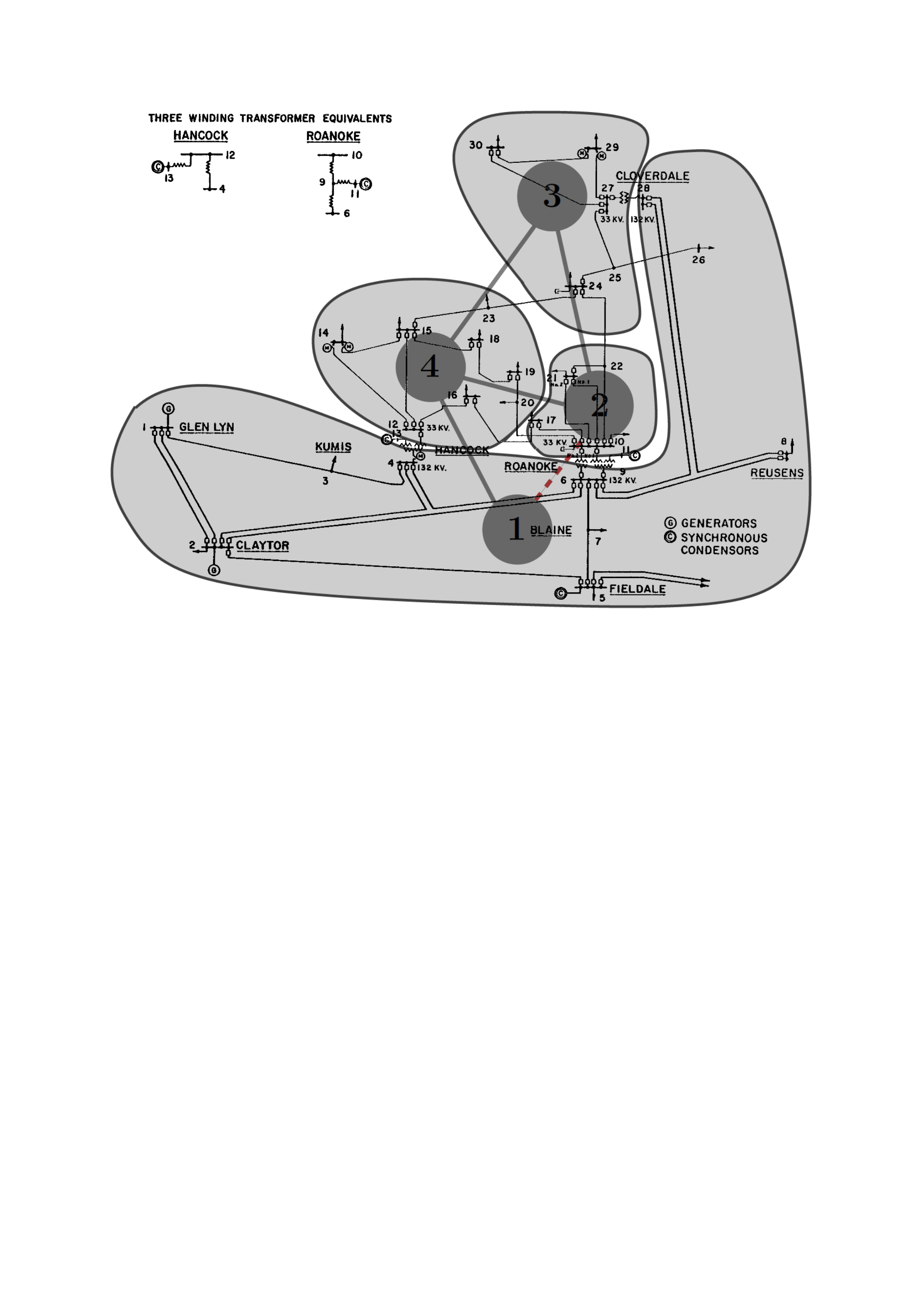}
	\caption{IEEE 30-bus system with induced connectivity graph $\mathcal{G}$ and paritioning $\mathcal{P}= \{\{1\text{-}8, 28\} $, $ \{9\text{-}11, 17, 21, 22\}$, $\{24\text{-}27, 29, 30\}$, $ \{12\text{-}16, 18\text{-}20, 23\}\}$.}
	\label{fig:ieee30bussystem}
\end{figure}

\begin{figure}[t]
	\centering
	\includegraphics[trim={1.4em 0.5em 0 0},clip,width=0.9\linewidth]{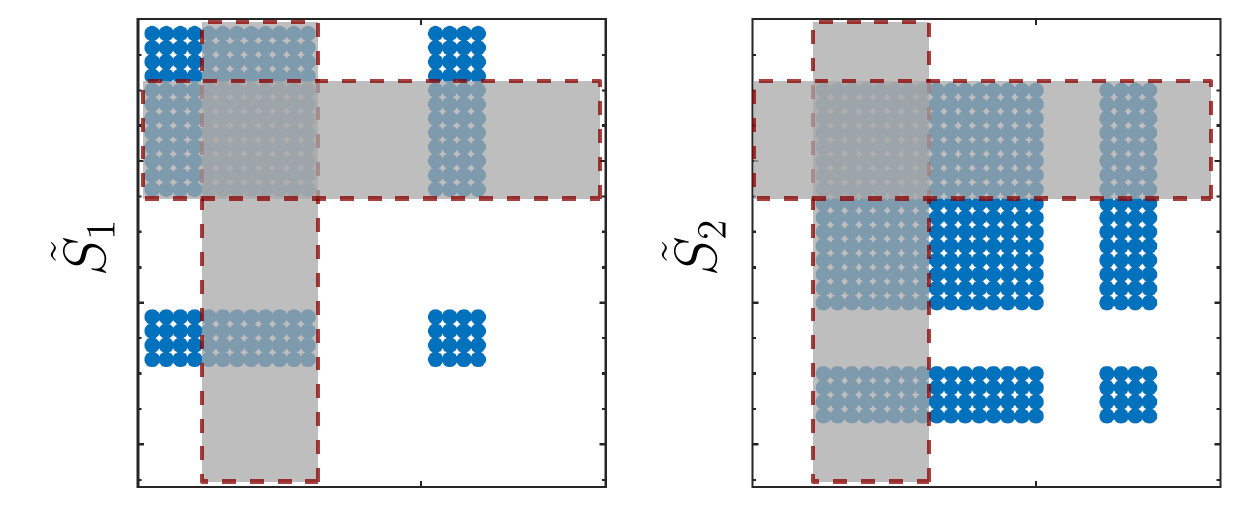}
	\caption{Sparsity patterns of the Schur complements for subproblem 1 and 2 with $\mathcal{C}(1)\cap\mathcal{C}(2)=\{5,\dots,12\}$ and the dashed interconnection from Figure \ref{fig:ieee30bussystem}.}
	\label{fig:Si}
\end{figure}

\begin{figure*}[t]
	\centering
	\begin{subfigure}{0.49\textwidth}
		\includegraphics[trim={2.5em 2em 0em 3em},width=1\linewidth]{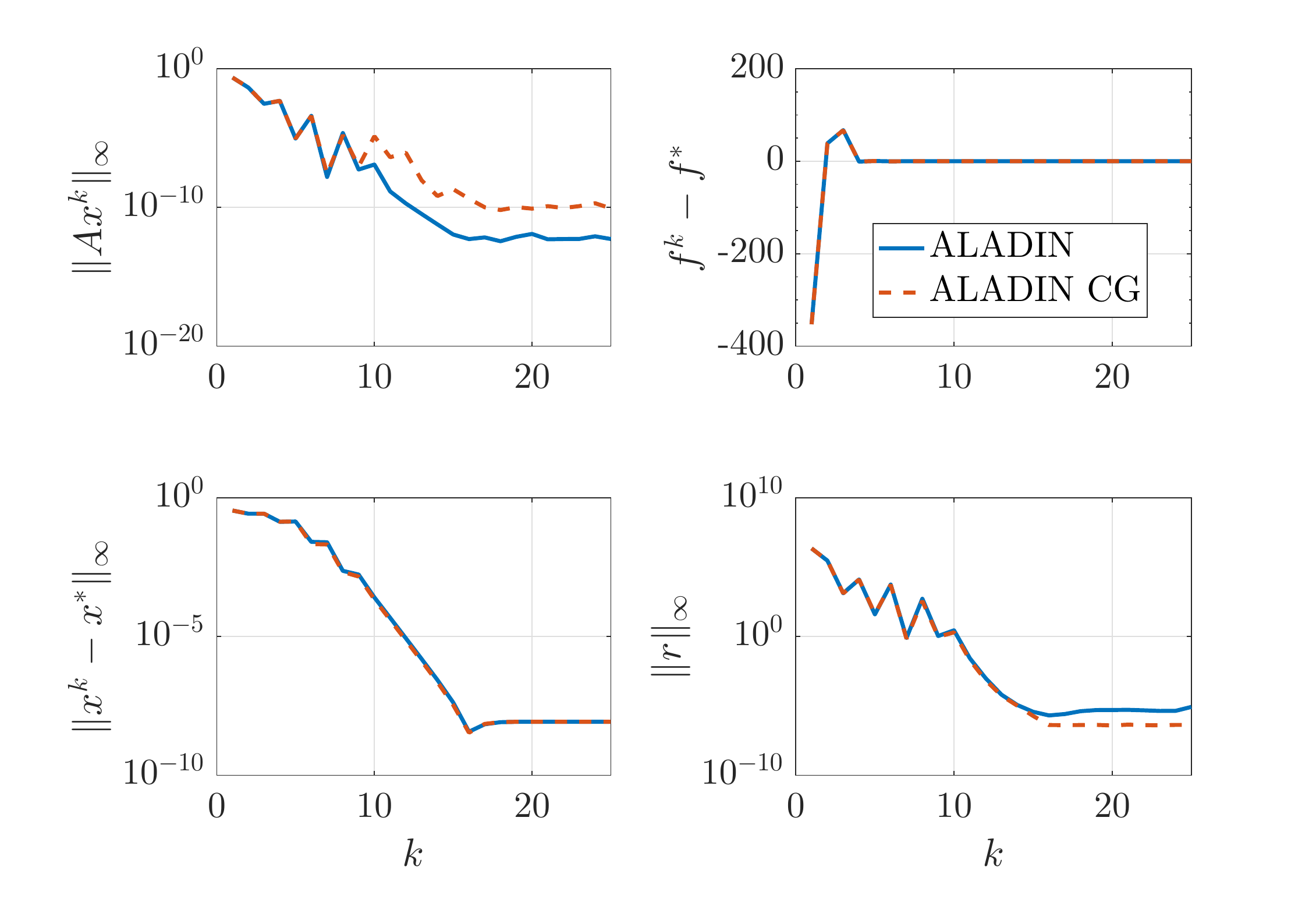}
		\captionsetup{width=.95\linewidth}
		\subcaption{Convergence of ALADIN with exact linear algebra and distributed conjugate gradient with $80$ inner iterations. \phantom{BlindtextBlindtextBlindtextBlindtext}}
		\label{fig:fullALCG}
	\end{subfigure}
	\begin{subfigure}{0.485\textwidth}
		\includegraphics[trim={2em 1em 0 3em},clip,width=1\linewidth]{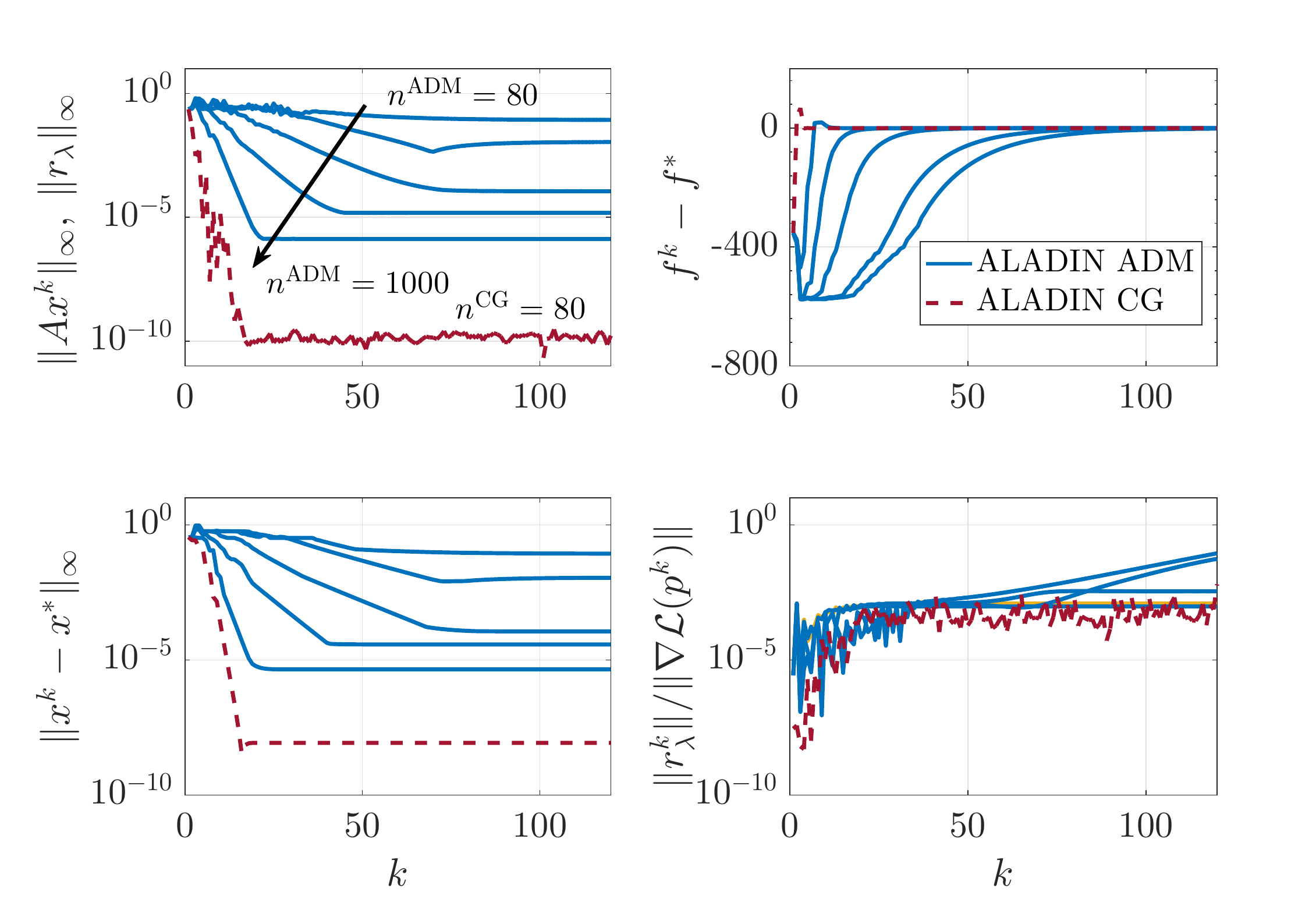}
		\captionsetup{width=1\linewidth}
		\subcaption{Convergence of ALADIN for $\{80,100,200,400,1000\}$ inner iterations with ADMM and for $80$ inner iterations with distributed conjugate gradient.}
		\label{fig:diffIters}
	\end{subfigure}
	\caption{Convergence behavior of different decentralized ALADIN variants.}
	\label{fig:convALCG}
\end{figure*}

 The 30-bus example has two physical interconnections between  subproblems 1 and 2 shown in Figure \ref{fig:ieee30bussystem}. 
 This leads to eight consensus constraints jointly assigned to subproblem 1 and 2 \cite{Engelmann2019}.
 Figure \ref{fig:Si} shows the resulting sparsity patterns of the corresponding  Schur-complements $ \tilde S_1\in \mathbb{R}^{32\times 32}$ and $ \tilde S_2\in \mathbb{R}^{32\times 32}$. 
 One can observe an overlap in the corresponding rows/columns of $\tilde S_1$ and $\tilde S_2$ predicted by Lemma \ref{lem:sparsity}. The rows/columns of the remaining Schur-complements $\tilde S_3$ and $\tilde S_4$ are zero respectively.

Figure \ref{fig:fullALCG}  shows the behavior of standard ALADIN (exactly solved coordination QP) and for ALADIN CG.  
Figure \ref{fig:diffIters} depicts the results for inexactly solved coordination QP with different fixed numbers of inner iterations for ALADIN CG and ALADIN ADMM. 
Observe that there is almost no difference in the convergence rate of standard ALADIN compared with ALADIN CG with 80 inner iterations.

In contrast, different numbers of inner iterations influence the total convergence behavior of ALADIN ADMM, cf. Figure \ref{fig:diffIters}. Indeed the convergence speed varies greatly with $n^{\text{AD}} \in \{80,100,200,400,1000\}$; also the achievable accuracy of ALADIN ADMM seems to be limited by different numbers of inner ADMM iterations.  
Whereas for ALADIN CG a fixed number of inner iterations yields good performance, the number of inner iterations necessary for ALADIN ADMM depends on the desired solution accuracy and it effects the overall convergence speed (i.e. the number of outer ALADIN iterations).

This behavior is underpinned by the total number of inner iterations (\# of inner iterations times \# of outer iterations) shown in Table \ref{tab::totInnerIter}. 
%
%

\begin{table}[t]
	\centering
	\renewcommand{\arraystretch}{1.7}
	\caption{Total iterations versus inner iterations for OPF.}
	\renewcommand{\arraystretch}{1.2}
	\begin{tabular}{crllllll}
		\toprule
		$n^{inner}$ & $\epsilon$ &	80 & 100  & 200  & 400 & 1000
		\\ \midrule
		CG & $10^{-4}$ & 800  &	800 & 800  &  800 & 800     \\ 
		ADMM & $10^{-2}$ & - & 7 000 & 7 000 & 7 600 & 11 000      \\
		& $10^{-3}$     & -        &	- & 10 800  &  10 800 & 13 000     \\
		& $10^{-4}$  & -  &	-    & -  &  14 800 & 16 000     \\ 
		\bottomrule
	\end{tabular}
	\label{tab::totInnerIter}
\end{table}

Figure \ref{fig:convLAlate} depicts the convergence behavior of distributed conjugate gradient and ADMM for two different instances of \eqref{eq:redRedSystemAlg}.
The left-hand side shows the results for ALADIN CG and ALADIN ADMM at one of the first iterations of ALADIN where $\tilde S$ is quite ill-conditioned. 
The right-hand side depicts the convergence of both algorithms when ALADIN is almost converged and therefore the condition number of $\tilde S$ is smaller. 
Observe the sublinear convergence rate of ADMM versus the practically superlinear convergence rate of conjugate gradient (cf. Table \ref{tab::convPropCGAL}) in both cases. 
Furthermore, note that the theoretical finite convergence of CG (here this would be 32 iterations) is not realized due to the conditioning of $\tilde S$. 
However, the practical convergence rate of centralized CG appears to be superior to most other available decentralized 
 methods \cite{Nedic2018}.

\begin{figure}[t]
	\centering
	\includegraphics[trim={0.5em 1em 0em 1em},width=0.48\linewidth]{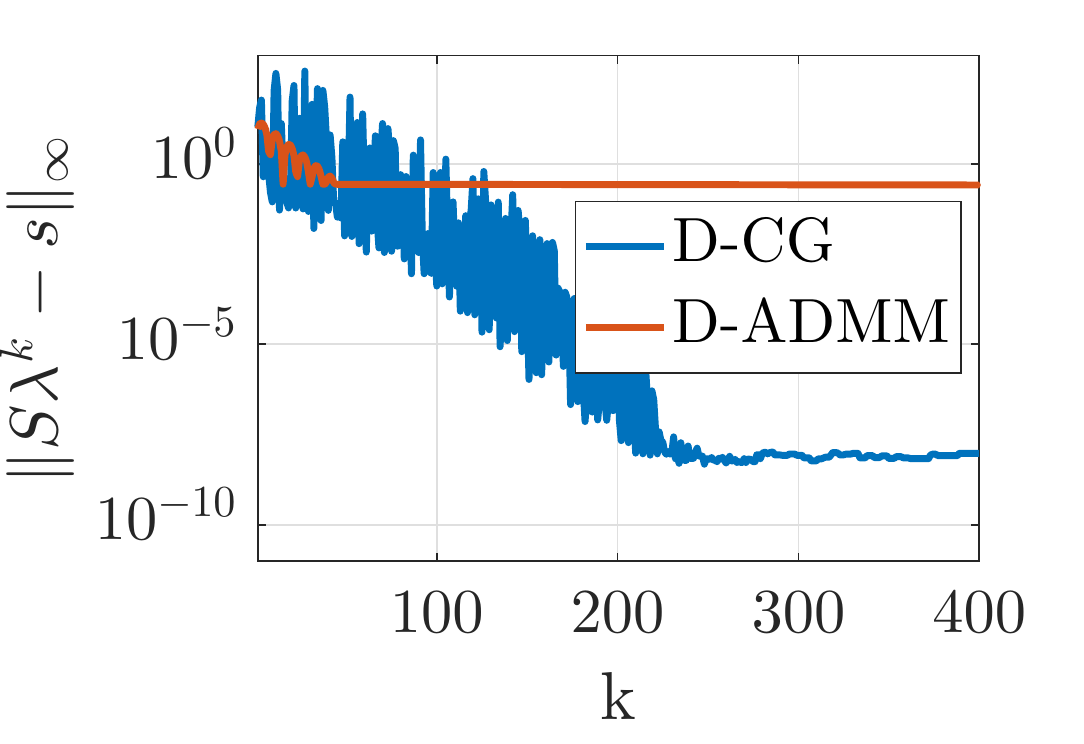}
	\includegraphics[trim={0.5em 1em 0em 1em},width=0.48\linewidth]{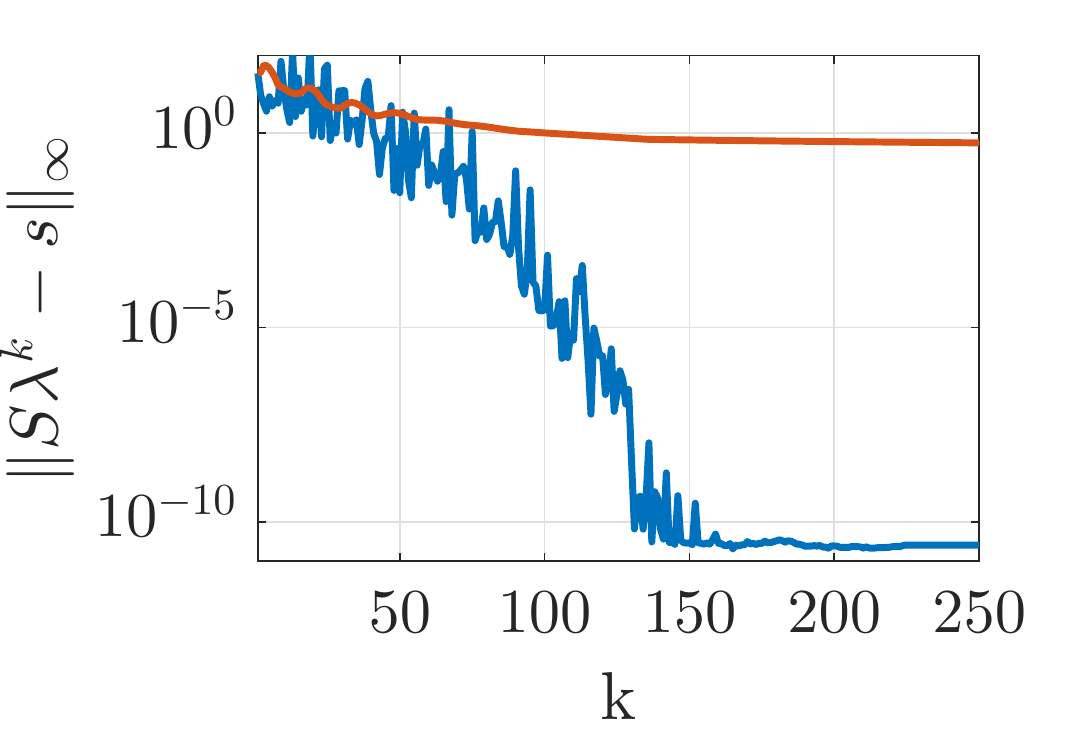}
	\caption{Convergence of ADMM  and CG for typical $\tilde S$ occuring in OPF. On the left we have a $\tilde S$ with $\operatorname{cond(\tilde S)=8 \cdot 10^9}$ in the beginning of the ALADIN iterations and on the right we have $\operatorname{cond(\tilde S)=5 \cdot 10^6}$ when ALADIN is almost converged.}
	\label{fig:convLAlate}
\end{figure}

%
%
%
%
%

\subsection{Distributed control of mobile robots}
As a second example  we consider an Optimal Control Problem (OCP) where two mobile robots should reach their final position while keeping a minimum distance to each other, cf. \cite{Mehrez2017}.
The centralized OCP reads
\begin{subequations} \label{eq:robotOCP}
\begin{align} 
&\hspace{-2em}\min_{z_i(\cdot),u_i(\cdot), \forall i \in \mathcal{R}} \int_0^T \sum_{i\in \mathcal{R}} \|z_i-z^e_i\|_{Q_i}^2 + \|u_i\|_{R_i}^2\, dt \label{eq:stageCost} \\
\quad \text{s.t.} \;\;\; & \dot z_i(t) = f_i(z_i(t),u_i(t)),\;\;  z_i(0)=z_{i0}, && \hspace{-0.6cm}\forall i \in \mathcal{R} \label{eq:dynamics} \\ \label{eq:zeroTermConstr}
&(x,y)_i^\top(T)=(x^e,y^e)_i^\top, &&\hspace{-0.6cm} \forall i \in \mathcal{R} \\
& \|(x, y)_i^\top(t)-(x, y)_j^\top(t)\|_2^2\geq d^2, && \hspace{-0.6cm} i \neq j
\end{align}
\end{subequations}
where $z_i=(x_i\; y_i\; \theta_i)^\top$ is the state of each robot $i \in \mathcal{R}$, $x_i$ and $y_i$ describe the robots position in the $x$-$y$-plane, and $\theta_i$ is the yaw angle with respect to the $x$-axis (Fig. \ref{fig:robots}). 
The stage cost \eqref{eq:stageCost} is the sum of quadratic tracking cost with respect to the desired end position $z^e_i \in \mathbb{R}^3$ for all robots. Constraint  \eqref{eq:dynamics} are the continuous-time dynamics  
\[
\dot z_i= f_i(z_i,u_i) :=
\begin{pmatrix}
v_i\cos (\theta_i)  & v_i\sin(\theta_i) & \omega_i \end{pmatrix}^\top.
\forall i \in \{1,2\}.
\]
The inputs $u_i=(v_i\;\omega_i)^\top$ are the velocity $v_i$ the turning rate $\omega_i$.
The terminal constraint \eqref{eq:zeroTermConstr}  and the stage cost \eqref{eq:stageCost} are chosen having  
a distributed NMPC setting in mind \cite{Rawlings2017}.

In order to convert \eqref{eq:robotOCP} into a partially separable NLP \eqref{eq:sepProb}, we introduce auxiliary variables duplicating the predicted $(x$-$y)$ trajectories of each robot pair and enforce consensus by the constraint \eqref{eq:ConsConstr}. 
Due to space limitations we do not elaborate this in detail.
We employ a direct solution approach and discretize \eqref{eq:robotOCP} via Euler-backward; the sampling period is $0.1\,$ seconds and the horizon is $T=10\,$ seconds. We consider $|\mathcal{R}|=2$ robots which should keep a distance of $d=5\,$m with  $Q=0.1\cdot \operatorname{diag}\large ((10\;\;10\;\;1))$ and $R=\operatorname{diag}\large ((1\;\;1))$.
We use $\rho=10^2$, $\mu = 10^6$ and $\rho^{\text{AD}}=10^{-1}$ as tuning parameters for ALADIN.

Figure \ref{fig:openLoppTraj} shows the optimal open-loop trajectories for \eqref{eq:robotOCP}.
One can observe that the goal of collision avoidance is satisfied
while the robots move to their target positions.
Interestingly, Problem \eqref{eq:robotOCP} seems to be numerically quite different to the OPF problem.
Here, $n^{\text{CG}}=30$ inner iterations for CG suffice for local convergence although the problem size is ($n_x=1\,200$) much larger. 
At the same time, at least $n^{\text{AD}}=2\,400$ inner iterations were needed for ADMM to achieve an accuracy of $\epsilon=10^{-4}$. 

\begin{figure}[t]
	\centering
	\captionsetup[subfigure]{position=b}
	\subcaptionbox{Robot models. 	\label{fig:robots}}{\includegraphics[width=0.49\linewidth]{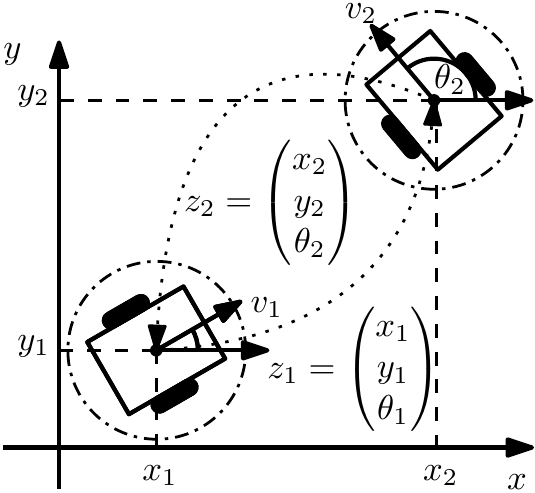}}
	\centering
	\subcaptionbox{Open-loop trajectories. 	\label{fig:openLoppTraj}}{\includegraphics[trim={0em 0em 0em 0em},clip,width=0.49\linewidth]{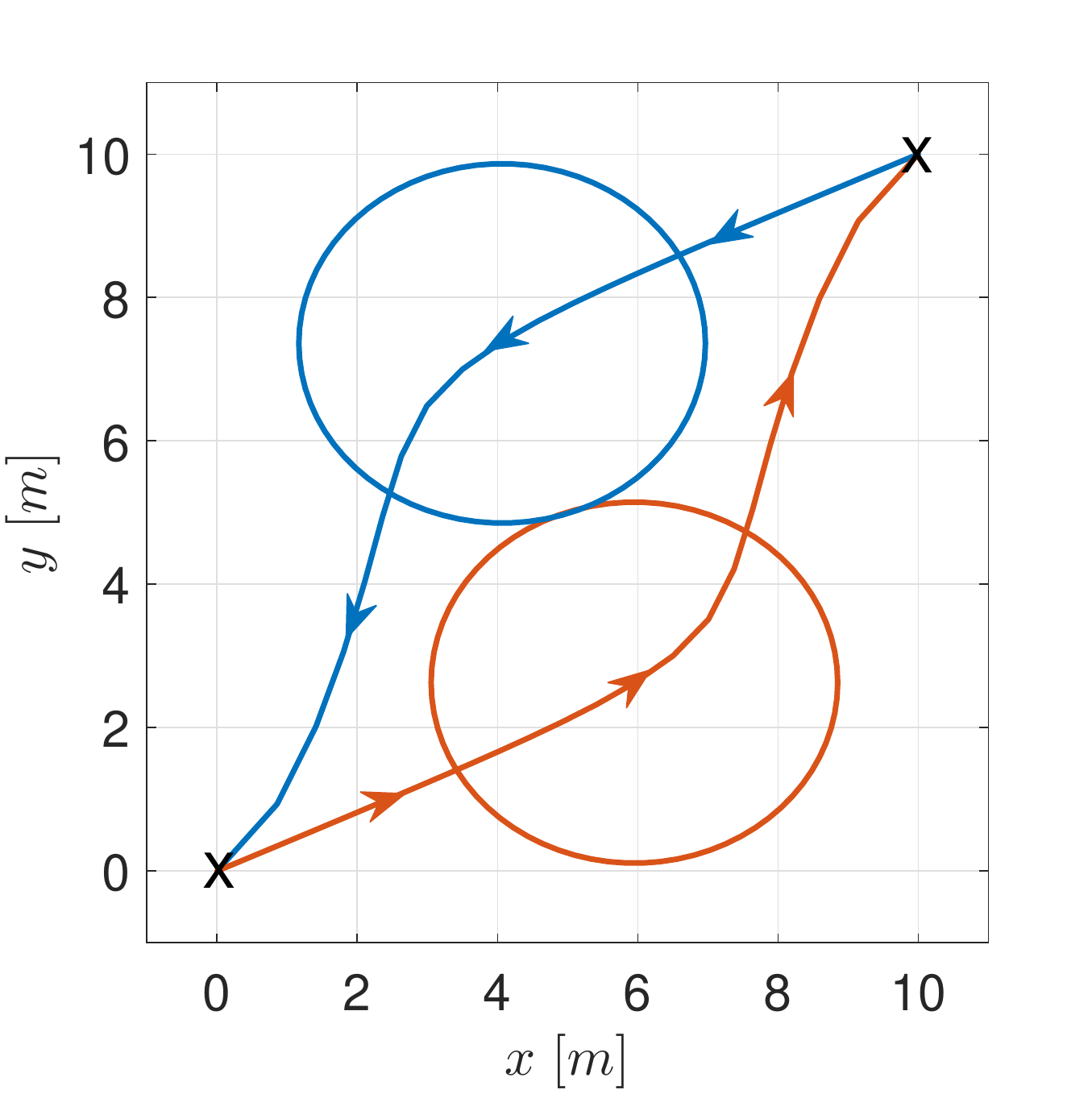}}
	\caption{Distributed control of mobile robots.}
\end{figure}

\subsection{Numerical communication analysis}
Finally, we evaluate forward communication as introduced in Section \ref{se:commAnal} practically. 
Table \ref{tab::comparison} summarizes  the forward communication for both examples. 
In addition the last two rows in both parts of Table~\ref{tab::comparison} depict the total communication (per step-communication times outer \# of ALADIN iterations) for a termination tolerance of $\epsilon=10^{-4}$.

\begin{table}
	\centering
	\renewcommand{\arraystretch}{1.7}
	\caption{Forward comm. to $\epsilon = 10^{-4}$ with $n^{\text{AD}}=400$, $n^{\text{CG}}=80$ for OPF and $n^{\text{AD}}=2\,400$, $n^{\text{CG}}=30$ for robot control.}
	\renewcommand{\arraystretch}{1.2}
	\begin{tabular}{crcccc}
		\toprule
		&      variant &                    standard                   & \hspace{-1em} condensed &    ADMM    &    CG     \\ \midrule
		\multirow{5}{*}{{\rotatebox[origin=c]{90}{OPF}}}
		&  local prep. &                     -                      &            -            &     -      & $2\,048$  \\
		& local  iter. &                     -                      &            -            & $25\, 600$ & $5\,120$  \\
		&       global &  \hspace{-1em} $>\hspace{-0.3em}9\,858$  & \hspace{-1em}  $1\,056$ &     -      &   $960$   \\
		\cmidrule{2-6} &   local tot. &                     -                      &            -            & $691\,200$ & $53\,248$ \\
		&  global tot. & \hspace{-1em} $>\hspace{-0.3em} 98\,580$ & \hspace{-1em} $10\,560$ &     -      & $9\,600$  \\ \midrule
		\multirow{5}{*}{{\rotatebox[origin=c]{90}{robots}}}
		&  local prep. &                     -                      &            -            &     -      & $80\,000$ \\
		& local  iter. &                     -                      &            -            &   $960\,000$   & $12\,000$ \\
		&       global &     \hspace{-1em} $>\hspace{-0.3em}824\,506$     &  $40\,200$    &     -      &   $120$   \\
		\cmidrule{2-6} &   local tot. &              -            &            -            &     $9\,600\,$k       & $200\,$k  \\
		&  global tot. &    \hspace{-1em} $>\hspace{-0.3em} 20\,613\,$k     &     $1\,005\,$k     &     -      &  $1\,$k   \\ \bottomrule
	\end{tabular}
	\label{tab::comparison}
\end{table}

As expected, ALADIN with condensing (Algorithm \ref{alg:ALADINbil}) needs much less communication compared to standard ALADIN variant (Algorithm \ref{alg:ALADIN2}). 
Solving \eqref{eq:redRedSystemAlg} with the decentralized variants of conjugate gradient or ADMM increases total communication compared to the condensed ALADIN variant. 
Furthermore, the total communication of ALADIN CG is smaller compared to standard ALADIN. 
The comparably large local communication burden of ALADIN ADMM stems from the increased number of inner iterations, cf. Figure \ref{fig:diffIters} and Table \ref{tab::totInnerIter}. 

Finally, it is worth to be noted investing the very limited global coordination and communication effort required by ALADIN CG one can  achieve much better performance compared with entirely decentralized coordination, cf. right-hand side columns of Table \ref{tab::comparison}. 


\section{Summary \& Outlook}
This paper has proposed a framework for designing decentralized algorithms for non-convex constrained optimization problems via bi-level distribution of the ALADIN algorithm. The core idea is to add a second (inner) layer of distributed/decentralized computation to ALADIN, whereby the coordination QP is first condensed (as a post-processing step of solving the local non-convex subproblems) and then solved in decentralized fashion. 
We have presented sufficient conditions on the numerical solution accuracy necessary to preserve local quadratic convergence properties of ALADIN. Moreover, we have shown how this bound can be enforced by means of decentralized inner algorithms. Specifically, we have proposed  a decentralized variant of the conjugate gradient method, which shows promising performance. We also compared it to using ADMM at the inner level. 
Simulation studies from power systems and robotics underpin the efficacy of the proposed scheme. These studies also indicate that decentralized conjugate gradient outperforms ADMM in terms of convergence speed and in terms of total communication effort. 

We expect that the proposed bi-level distribution framework opens new avenues for future research, e.g.,  on decentralizing globalization strategies or on tailored decentralized algorithms for distributed non-linear model predictive control.

\appendices

\section{}
\begin{lemma}[$S_i \succeq 0, S \succ 0$ and $S_i^\top=S_i$] \label{lem:SposDef}
	The matrices $S_i$ are positive semidefinite, $S=\sum_{i=1}^N S_i$ is positive definite and $S_i=S_i^\top$.
\end{lemma}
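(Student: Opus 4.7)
The plan is to verify the three claims in turn, with the main work being establishing positive definiteness of the sum.

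First I would establish symmetry. By definition $S_i = \bar A_i \bar H_i^{-1} \bar A_i^\top$ with $\bar H_i = Z_i^\top H_i Z_i$. Since $H_i$ is a symmetric Hessian approximation, $\bar H_i^\top = \bar H_i$, and so $(\bar H_i^{-1})^\top = \bar H_i^{-1}$. Taking transposes of $S_i$ yields $S_i^\top = \bar A_i (\bar H_i^{-1})^\top \bar A_i^\top = S_i$, which settles symmetry.

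Second, for $S_i \succeq 0$, the key observation is that $\bar H_i \succ 0$: Assumption~\ref{ass:LICQ} (SOSC) gives positive definiteness of $H_i$ on the nullspace of $C_i^{\mathrm{act}}$, and since the columns of $Z_i$ form a basis of that nullspace, the reduced Hessian $\bar H_i = Z_i^\top H_i Z_i$ is positive definite, hence so is $\bar H_i^{-1}$. For any $v \in \mathbb{R}^{n_c}$, setting $w := \bar A_i^\top v$ gives
\begin{equation*}
v^\top S_i v \;=\; w^\top \bar H_i^{-1} w \;\geq\; 0,
\end{equation*}
which proves $S_i \succeq 0$.

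Third, for $S = \sum_{i \in \mathcal{R}} S_i \succ 0$, I would rewrite $S$ in stacked form. With $\bar H := \operatorname{diag}(\bar H_i)$ and $\bar A := (\bar A_1,\dots,\bar A_N)$ we have
\begin{equation*}
S \;=\; \bar A \bar H^{-1} \bar A^\top,
\end{equation*}
and $\bar H^{-1}$ is block-diagonal with positive definite blocks, hence $\bar H^{-1} \succ 0$. The same factorisation argument as in step two then gives $v^\top S v = 0$ iff $\bar A^\top v = 0$, so positive definiteness is equivalent to $\bar A$ having full row rank. The main obstacle is justifying this rank condition: it follows by combining LICQ of the active local constraints (which yields full row rank of $C^{\mathrm{act}}$ and hence the existence of the nullspace basis $Z$) with the implicit standing assumption that the consensus matrix $A$ in \eqref{eq:ConsConstr} contains no redundant rows and that its rows are not confined to $\operatorname{range}(C^{\mathrm{act}\,\top})$, i.e.\ that the composite constraint $(C^{\mathrm{act}},A)$ satisfies LICQ as well. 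Under this condition $\bar A = A Z$ inherits full row rank, which yields $S \succ 0$ and completes the proof.
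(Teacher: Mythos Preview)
Your argument is correct and follows essentially the same route as the paper: symmetry from $H_i=H_i^\top$, semidefiniteness of each $S_i$ from $\bar H_i^{-1}\succ 0$ combined with the congruence $S_i=\bar A_i\bar H_i^{-1}\bar A_i^\top$, and definiteness of the sum from the stacked representation $S=\bar A\bar H^{-1}\bar A^\top$ together with full row rank of $\bar A$. Your treatment is in fact more explicit than the paper's on the last point: the paper simply invokes ``$A$ has full rank by LICQ'', whereas you spell out that what is really needed is full row rank of $\bar A = AZ$, which amounts to LICQ of the composite constraint $(C^{\mathrm{act}},A)$; this is indeed implicit in Assumption~\ref{ass:LICQ} as stated for the minimizers of \eqref{eq:sepProb}.
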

\begin{proof}
	By Assumption \ref{ass:LICQ} we have that all $\bar H_i$s are positive definite, i.e. $x^\top \bar H_i x > 0$ for all $x \in \mathbb{R}^{n_i}$. With $x:=H_i^{-1}y$ we have $x^\top \bar H_i x=y^\top (\bar H_i^{-1})^\top \bar H_i \bar H_i^{-1}y=y^\top \bar H_i^{-1} y > 0$. Furthermore let $y:=\bar A_i z$. Then $z^\top \bar A_i^\top \bar H_i^{-1} \bar A_i z = z^\top S_i z  \geq 0$ for all $z \in \mathbb{R}^{n_c}$ as $\bar A_i$ may be rank deficient.  
	
	$S\succ 0:$ We know from Assumption \ref{ass:LICQ} that $x^\top \bar Hx > 0$. By defining $y:=Ax$ we have $x^\top A^\top \bar H A x=x^\top Sx >0$ as $A$ has full rank by LICQ.
	
	As $H_i=H_i^\top$, we have $\bar H_i^\top = (Z_i^\top H_i Z_i)^\top=(H_i Z_i)^\top Z_i = Z_i^\top H_i^\top Z_i= \bar H_i$ and by the same argument $S_i^\top=(\bar A_i \bar H_i^{-1}\bar A_i^\top)^\top=S_i$. 
	To obtain $\tilde S_i$ we add elements to the main diagonal only yielding $\tilde S_i=\tilde S_i^\top$.
\end{proof}

\printbibliography



\ifCLASSOPTIONcaptionsoff
  \newpage
\fi

\end{document}